\newcolumntype{I}{!{\vrule width 2pt}}
\definecolor{dkgreen}{rgb}{0,0.6,0}
\definecolor{gray}{rgb}{0.5,0.5,0.5}
\definecolor{mauve}{rgb}{0.58,0,0.82}
\theoremstyle{definition}
\newtheorem{lem}{Lemma}[section]
\newtheorem{thm}{Theorem}[section]
\newtheorem{dfn}[lem]{Definition}
\newcommand{\bbR}{\mathbb{R}_{\geq 0}}
\newcommand{\Ouv}{\mathrm{Ouv}}
\newcommand{\oversum}[1]{\overset{#1}{\sum}}
\newcommand{\depsum}[2]{\underset{#1}{\overset{#2}{\sum}}}
\newcommand{\laxcolim}[1]{\underset{#1}{\mathrm{colim}^{lax}}  }
\newcommand{\oplaxcolim}[1]{\underset{#1}{\mathrm{colim}^{oplax}}  }
	\DeclareMathSymbol{\mlq}{\mathord}{operators}{'134}
	\DeclareMathSymbol{\mrq}{\mathord}{operators}{'42}
\begin{document}
	
\footskip30pt

\baselineskip=1.1\baselineskip

\title{Towards a General Theory of Dependent Sums}

\author{Peter Bonart}

\begin{abstract}
We introduce dependent adders. A dependent adder $A$ has for every $x \in A$ a way of adding together $x$ many elements of $A$. We provide examples from many disparate branches of mathematics. Examples include the field with one element $\mathbb{F}_1$, the real numbers with integrals as sums, the category of categories with oplax colimits as sums. We also consider modules over dependent adders and provide examples.
\end{abstract}

\maketitle
\thispagestyle{empty}
\pagestyle{plain}

\tableofcontents

\section{Introduction}

This paper defines \textit{dependent adders} and constructs examples for them.
Before precisely defining them, we define them roughly and enumerate the examples we are interested in.
Defined roughly, a \textit{dependent adder} consists of a set $A$, an element $1 \in A$, for every $x \in A$ a set $A^x$, called the \textit{set of $x$-indexed families} in $A$, and a function $\depsum{i}{x} : A^x \rightarrow A$ called the \textit{$x$-dependent sum function}, satisfying nice properties, like that $\depsum{i}{x} 1 = x$ and that every sum of the form $\depsum{j}{\depsum{i}{x}f(i)}g(j)$ can be rewritten as a sum of the form $\depsum{i}{x} \depsum{j}{f(i)} g(\phi(i,j))$ for some well-behaved function $\phi(i,j)$ depending only on $f$ and not on $g$.

Examples of dependent adders include:
\begin{enumerate}
	\item The set of natural numbers $\mathbb{N}$. For each $x \in \mathbb{N}$, an $x$-indexed family of natural numbers is a function $f : \{1,\dots,x\} \rightarrow \mathbb{N}$, and its sum is the usual sum $$\depsum{i}{x} f(i) := \underset{i=1}{\overset{x}{\sum}} f(i)$$
	\item The set of non-negative real numbers $\bbR$. For each $x \in \bbR$, we define an $x$-indexed family of real numbers to be a continuous function $f: [0,x] \rightarrow \bbR$.
	The sum of such a function is defined to be the integral
	$$\depsum{i}{x} f(i) := \underset{0}{\overset{x}{\int}} f(t)dt $$
	This integral always exists, is non-negative and finite.
	\item The category of small categories $Cat$.
	Define an $I$-indexed family of categories to be an isomorphism class of functors $F: I^{op} \rightarrow Cat$.
	The sum of $F$ is defined to be the lax $2$-colimit of $F$, also known as the Grothendieck construction
	$$\depsum{i}{I}F(i) := \underset{i \in I^{op}}{\mathrm{colim}}^{lax} F(i) = \int F$$
	Alternatively, one can define an $I$-indexed family of categories to be an isomorphism class of functors $F: I \rightarrow Cat$ and define its sum to be the oplax colimit of $F$.
	We need to take isomorphism classes of functors instead of just taking functors, because otherwise $Cat$ would only satisfy our axioms up to isomorphisms, instead of satisfying them strictly.
	\item The $p$-adic integers $\mathbb{Z}_p$.
	For every $x \in \mathbb{Z}_p$ we define an $x$-indexed family to be a continuous function $f : \mathbb{Z}_p \rightarrow \mathbb{Z}_p$.
	Since $\mathbb{N}$ is dense in $\mathbb{Z}_p$ we can find a sequence of natural numbers $x_n \in \mathbb{N}$ such that $x_n \rightarrow x$ in $\mathbb{Z}_p$.
	We define $$\depsum{i}{x} f(i) := \underset{n \rightarrow \infty}{\mathrm{lim}} \underset{i=1}{\overset{x_n}{\sum}} f(i) $$
	It is an easy exercise in $p$-adic analysis to show that this limit always exists and is independent of the choice of sequence $x_n$.	(See Lemma \ref{lemmaZpsums} for a proof of this claim).
	\item The set of all small cardinals $Card$, where a $\kappa$-indexed family is a function $\kappa \rightarrow Card$, and where the sums are the usual infinite sums of cardinals.
	\item The set of all small ordinal numbers $Ord$.
	For each ordinal $\alpha$ an $\alpha$-indexed family is a function of sets $f : \alpha \rightarrow Ord$.
	The sum of $f$ is defined via transfinite recusion on $\alpha$:
	$$\hspace{30pt} \depsum{x}{0} f(x) := 0 \quad \depsum{x}{\alpha +1} f(x) := (\depsum{x}{\alpha} f(x)) + f(\alpha) \quad \depsum{x}{\underset{\beta < \alpha}{\sup} \beta} f(x) := \underset{\beta < \alpha}{\sup} \depsum{x}{\beta} f(x) $$
	\item The two element set $\mathbb{F}_1 := \{0,1\}$, which is also sometimes called \enquote{the field with one element}\cite{lorscheid2018F1}. We define a $0$-indexed family to be a function from the empty set $\emptyset \rightarrow \mathbb{F}_1$, i.e. there is exactly one $0$-indexed family, which we call the empty family and denote  $\emptyset$.
	We define a $1$-indexed family to be a function $f: \{*\} \rightarrow \mathbb{F}_1$, i.e. there are exactly two $1$-indexed families $0$ and $1$ corresponding to the elements $\mathbb{F}_1$.
	We define $$\depsum{i}{0} \emptyset ;= 0 \quad \depsum{i}{1} 0 := 0 \quad \depsum{i}{1} 1 := 1$$
	\item The real unit interval $[0,1]$. Just as for the real numbers, for each $x \in [0,1]$ we define an $x$-indexed family to be a continuous function $f: [0,x] \rightarrow [0,1]$, and define the sum to be the integral
	$$ \depsum{i}{x} f(i) := \underset{0}{\overset{x}{\int}} f(t)dt$$
	This integral is again in $[0,1]$, because for every $t$ we have $f(t) \leq 1$, so $$\underset{0}{\overset{x}{\int}} f(t)dt \leq \underset{0}{\overset{x}{\int}} 1 dt = t \leq 1 $$
	\item The set of integers $\mathbb{Z}$. For each $x \in \mathbb{Z}$, an $x$-indexed family of integers is a function $f : \mathbb{Z} \rightarrow \mathbb{Z}$. For $x \geq 0$ the sum of $f$ is the usual sum. For $x < 0$ we define
	$$ \depsum{i}{x} f(i) := \underset{i=1}{\overset{-x}{\sum}} -f(1-i) $$
	\item The set of all (possibly negative) real numbers $\mathbb{R}$ with integrals as sums, where an $x$-indexed family is a continuous function $\mathbb{R} \rightarrow \mathbb{R}$.
	\item The interval $[-1,1]$ with integrals as sums, where an $x$-indexed family is a continuous function $[-1,1] \rightarrow [-1,1]$.
	\item The complex numbers $\mathbb{C}$.
	For every $z \in \mathbb{C}$ we define a $z$-indexed family to be an entire holomorphic function $f : \mathbb{C} \rightarrow \mathbb{C}$.
	We can choose a smooth path $\gamma : [0,1] \rightarrow \mathbb{C}$ that leads from $0$ to $z$, i.e. $\gamma(0) =0$ and $\gamma(1)=z$.
	We define
	$$\depsum{i}{z} f(i) :=  \underset{\gamma}{\int} f(t)dt = \underset{0}{\overset{1}{\int}} f(\gamma(t)) \gamma^\prime(t) dt $$
	By Cauchy's integral theorem this is independent of the choice of $\gamma$.
	\item Every commutative $\mathbb{Q}$-algebra $R$. For every $r \in R$ we define an $r$-indexed family to be a polynomial function $p: R \rightarrow R$, i.e. an element of the polynomial ring $p \in R[X]$.
	For every $d \in \mathbb{N}$ we define the $d$-th Faulhaber polynomial $F_d \in R[X]$ by
	$$F_d := \frac{1}{d+1}\depsum{n=0}{d} \binom{d+1}{n}B_nX^{d-n+1}$$where $B_n$ is the Bernoulli number with convention $B_1 = +\frac{1}{2}$.
	For $p \in R[X]$ we can write $p$ in the form $p = \depsum{i=0}{n}p_iX^i$ and define for every $x \in R$ the $x$-dependent sum of $p$ by
	$$\depsum{i}{x}p(i) := \depsum{i=0}{n}p_i \cdot F_i(x)$$
	Then $\depsum{i}{x}p(i)$ is a polynomial function in $x$ and for every $m \in \mathbb{N}$ the $m$-dependent sum $\depsum{i}{m}p(i)$ is equal to the usual sum $\depsum{i=1}{m}p(i)$, because of Faulhaber's formula $F_d(m) = \depsum{i=1}{m} i^d$. \cite{orosi2018simple}
	
\end{enumerate}

This paper tries to capture what all of these examples have in common in a unified framework.
In Section \ref{sectionDependentAdders} we define dependent adders, and very briefly investigate their most rudimentary properties. However we do not develop this theory very far in this paper. The majority of this paper is of a zoological nature and consists of definitions, examples, and proofs that the examples fit the definition.
In Section \ref{sectionExamples} we show that all the examples we listed above are in fact dependent adders.
In Section \ref{sectionRightModules} and \ref{sectionLeftModules} we consider right and left modules over dependent adders.
Given a dependent adder $A$, a right $A$-module $M$, is a place where one can form sums of the form $\depsum{i}{x} f(i) \in M$ where $x \in A$, $f(i) \in M$.
$\mathbb{F}_1$-modules are exactly the pointed sets.
$\mathbb{N}$-modules are exactly the monoids.
Every Banach space with its Bochner integrals is an $\mathbb{R}$-module.
Every cocomplete category is a $Cat$-module.
A left $A$-module $M$ is a place where one can form sums of the form $\depsum{i}{m} f(i)\in M$ where $m \in M$ and $f(i) \in A$. We show that the category of topological spaces an open maps $Top_{open}$ is a left module over the category of $Sets$, if one defines for $X \in Top_{open}$ an $X$-indexed family of sets to be a presheaf $\mathscr{F}$ on $X$, and defines the sum of $\mathscr{F}$ to be the étalé space of $\mathscr{F}$.

Our investigations have mostly been inspired by $\mathbb{F}_1$-geometry \cite{lorscheid2018F1} and Arakelov theory \cite{durov2007new}, and then derailed into something more general.
The \enquote{field with one element} $\mathbb{F}_1$ and the \enquote{complete local ring at infinity} $[-1,1]$ might not have binary sums $x + y$, but they have all dependent sums $\depsum{i}{x}f(i)$.

\section{Dependent Adders}\label{sectionDependentAdders}
Let $\mathscr{C}$ be a category with finite limits.
We denote the terminal object of $\mathscr{C}$ by $*$.

\begin{dfn}
	A \textit{dependent adder} in $\mathscr{C}$ consists of:
	\begin{enumerate}
		\item An object $A \in \mathscr{C}$.
		\item A morphism $p : F \rightarrow A$ in $\mathscr{C}$. We informally think of it as being an $A$-indexed family of objects of $\mathscr{C}$. For any morphism $x : U \rightarrow A$ we write $\llbracket x \rrbracket$ for the pullback of the diagram $U \overset{x}{\rightarrow} A \overset{p}{\leftarrow} F$ and write $p_x$ for the canonical map $\llbracket x \rrbracket \rightarrow U$.
		
		We informally think of a morphism $x : U \rightarrow A$ to be a generalized element of $A$, and a morphism $\llbracket x \rrbracket \rightarrow A$ to be an $x$-indexed family of elements of $A$.
		\item A morphism $1_F : * \rightarrow F$, called the \textit{unit}. We also define $1_A :* \rightarrow A$ by $1_A := p \circ 1_F$.
		\item For every $x : U \rightarrow A$ in $\mathscr{C}$ a function of sets
		$$\oversum{x} : \mathrm{Hom}_{\mathscr{C}}(\llbracket x \rrbracket, A) \rightarrow \mathrm{Hom}_{\mathscr{C}}(U,A)$$
		called the \textit{$x$-dependent sum function}.
		This map is supposed to be natural in $x \in \mathrm{Ob}(\mathscr{C}/A)$ in the sense that for every triangle of the form
		$$\xymatrix{ U \ar[rr]^{f} \ar[dr]_x & & U^\prime \ar[dl]^{y}\\
		             & A & }$$
	    the following diagram commutes
	    $$\xymatrix{ \mathrm{Hom}_{\mathscr{C}}(\llbracket x \rrbracket, A) \ar[r]^(0.55){\overset{x}{\sum}} & \mathrm{Hom}_{\mathscr{C}}(U,A)  \\
	     \mathrm{Hom}_{\mathscr{C}}(\llbracket y \rrbracket, A) \ar[r]^(0.55){\overset{y}{\sum}}  \ar[u]_{(f \underset{A}{\times} id_F)^*}& \mathrm{Hom}_{\mathscr{C}}(U^\prime,A) \ar[u]^{f^*} } $$
        \item For every $x : U \rightarrow A$ and every $f : \llbracket x \rrbracket \rightarrow A$ 
        a morphism
        $$f^\flat : \llbracket f \rrbracket \rightarrow \llbracket \oversum{x}f \rrbracket $$
        called the \textit{flattening map of $f$}. This is supposed to be a morphism in $\mathscr{C}/U$ in the sense that the following diagram commutes:
        $$\xymatrix{ \llbracket f \rrbracket \ar[d]_{p_f} \ar[r]^{f^\flat} & \llbracket \oversum{x} f \rrbracket \ar[d]^{p_{(\oversum{x} f)}} \\
        \llbracket x \rrbracket \ar[r]^{p_x} & U } $$
	\end{enumerate}
	We now demand that the following axioms are satisfied:
	\begin{enumerate}
		\item Right Unit axiom: For every $x : U \rightarrow A$ let $const_{1_A} : \llbracket x \rrbracket \rightarrow A$ be the composite function $\llbracket x \rrbracket \rightarrow * \overset{1_A}{\rightarrow} A$.
		We demand that $$\oversum{x} const_{1_A} = x$$
		\item Left Unit axiom: For every $x : U \rightarrow A$ and $f : \llbracket x \rrbracket \rightarrow A$, let $const_{1_A}: \llbracket x \rrbracket \rightarrow A$ be the constant $1_A$ function. We demand that
		$$\oversum{const_{1_A}} f \circ const_{1_A}^\flat = f$$
		\item Sum Associativity Axiom: For every $x : U \rightarrow A$, $f : \llbracket x \rrbracket \rightarrow A$ and $g : \llbracket \oversum{x}f \rrbracket \rightarrow A$ we demand that
		$$\oversum{\oversum{x} f} g = \oversum{x} \oversum{f} g \circ f^\flat  $$
		\item Flatten Associativity Axiom: For every $x : U \rightarrow A$, $f : \llbracket x \rrbracket \rightarrow A$, $g : \llbracket \oversum{x}f \rrbracket \rightarrow A$ the following diagram commutes
		$$\xymatrix{
			\llbracket g \circ f^\flat \rrbracket \ar[d]_{ f^\flat \underset{A}{\times} id_F  } \ar[r]^{( g \circ f^\flat)^\flat} & \llbracket \oversum{f} g \circ f^\flat \rrbracket \ar[d]^{( \oversum{f} g \circ f^\flat)^\flat  } \\
			\llbracket g \rrbracket \ar[r]^(.45){g^\flat} & \llbracket \oversum{x} \oversum{f} g \circ f^\flat \rrbracket 
		} $$
	\end{enumerate}
	
\end{dfn}

\begin{dfn}
	A dependent adder $A$ in $\mathscr{C}$ is called \textit{commutative} if it satisfies the following Fubini axiom: For every $x : U \rightarrow A$, $y : U \rightarrow A$ and $f : \llbracket x \circ p_y \rrbracket \rightarrow A$, we have a canonical isomorphism $switch : \llbracket x \circ p_y \rrbracket \rightarrow \llbracket y \circ p_x \rrbracket$ and demand that
	$$\oversum{y} \oversum{ x \circ p_y } f  = \oversum{x} \oversum{ y \circ p_x } f\circ switch $$
\end{dfn}
All the examples in the introduction, except the ordinals $Ord$ are commutative.

\begin{dfn}
	A dependent adder $A$ in $\mathscr{C}$ has a \textit{zero object} if there exists a morphism $0_A : * \rightarrow A$ such that
	\begin{enumerate}
		\item For all $x : U \rightarrow A$ let $const_{U,0_A}$ be the composite $U \rightarrow * \overset{0_A}{\rightarrow} A$ and let $const_{\llbracket x \rrbracket,0_A}$ be the composite $\llbracket x \rrbracket \rightarrow * \overset{0_A}{\rightarrow} A$. We demand that
		$$\oversum{x} const_{\llbracket x \rrbracket,0_A} = const_{U,0_A} $$
		\item For all $f : \llbracket 0_A \rrbracket \rightarrow A$
		$$\oversum{0_A} f = 0_A$$
	\end{enumerate}
\end{dfn}
All the examples in the introduction have a zero object.

\subsection{Categorical Structure} \label{sectionCategoricalStructure}

Let $A$ be a dependent adder in a category with pullbacks $\mathscr{C}$.

Given $x : U \rightarrow A$, $f : \llbracket x \rrbracket \rightarrow A$ and $g : \llbracket \oversum{x}f \rrbracket \rightarrow A$ we define the \textit{composition of $f$ and $g$} to be
$$ f \boxtimes g := \oversum{f} (g \circ f^\flat) $$
Then the Left Unit Axiom states $const_{1_A} \boxtimes f = f$.\\
The Sum Associativity Axiom states that $\oversum{\oversum{x}f }g = \oversum{x} f \boxtimes g $.\\
The Flatten Associativity Axiom states that the following diagram commutes:
	$$\xymatrix{
	\llbracket g \circ f^\flat \rrbracket \ar[d]_{ f^\flat \underset{A}{\times} id_F  } \ar[r]^{(g \circ f^\flat)^\flat} & \llbracket f \boxtimes g \rrbracket \ar[d]^{(f \boxtimes g)^\flat  } \\
	\llbracket g \rrbracket \ar[r]^(.45){g^\flat} & \llbracket \oversum{x} f \boxtimes g \rrbracket 
} $$

The $\boxtimes$ composition is associative in the following sense:
\begin{lem}\label{lemmaAssociativity}
	For every $x : U \rightarrow A$, $f : \llbracket x \rrbracket \rightarrow A$, $g : \llbracket \oversum{x} f \rrbracket \rightarrow A$ and $h : \llbracket \oversum{x} f \boxtimes g \rrbracket \rightarrow A$ we have that
	$$f \boxtimes (g \boxtimes h) = (f \boxtimes g) \boxtimes h  $$
\end{lem}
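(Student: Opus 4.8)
The plan is to prove the identity purely equationally: unfold the definition $a\boxtimes b = \oversum{a}(b\circ a^{\flat})$ on the side $(f\boxtimes g)\boxtimes h$ and transport the result to $f\boxtimes(g\boxtimes h)$, using only the naturality condition on $\oversum{}$, the Sum Associativity Axiom, and the Flatten Associativity Axiom (the two unit axioms are not needed). Throughout, $x\colon U\to A$, $f\colon\llbracket x\rrbracket\to A$, $g\colon\llbracket\oversum{x}f\rrbracket\to A$, $h\colon\llbracket\oversum{x}f\boxtimes g\rrbracket\to A$ are as in the statement, and one repeatedly invokes the Sum Associativity Axiom in the form $\oversum{x}(f\boxtimes g)=\oversum{\oversum{x}f}g$ to identify the various iterated-sum objects — e.g. the domain of $h$ is $\llbracket\oversum{x}(f\boxtimes g)\rrbracket=\llbracket\oversum{\oversum{x}f}g\rrbracket$, which is simultaneously the codomain of $(f\boxtimes g)^{\flat}$ and of $g^{\flat}$.

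First I would start from $(f\boxtimes g)\boxtimes h=\oversum{f\boxtimes g}\bigl(h\circ(f\boxtimes g)^{\flat}\bigr)$ and, writing $f\boxtimes g=\oversum{f}(g\circ f^{\flat})$, apply the Sum Associativity Axiom with base $f$, middle term $g\circ f^{\flat}\colon\llbracket f\rrbracket\to A$ and top term $h\circ(f\boxtimes g)^{\flat}\colon\llbracket f\boxtimes g\rrbracket\to A$; this rewrites $(f\boxtimes g)\boxtimes h$ as $\oversum{f}\,\oversum{g\circ f^{\flat}}\bigl(h\circ(f\boxtimes g)^{\flat}\circ(g\circ f^{\flat})^{\flat}\bigr)$. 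Next I would invoke the Flatten Associativity Axiom, which — in the form recorded just before the lemma — is exactly the equality of morphisms $(f\boxtimes g)^{\flat}\circ(g\circ f^{\flat})^{\flat}=g^{\flat}\circ(f^{\flat}\underset{A}{\times}id_{F})$ out of $\llbracket g\circ f^{\flat}\rrbracket$; substituting it turns the argument of the inner sum into $h\circ g^{\flat}\circ(f^{\flat}\underset{A}{\times}id_{F})$.

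Finally I would apply the naturality square for $\oversum{}$ to the (trivially commuting) triangle over $A$ with source $g\circ f^{\flat}\colon\llbracket f\rrbracket\to A$, target $g\colon\llbracket\oversum{x}f\rrbracket\to A$ and connecting morphism $f^{\flat}$. Naturality then reads $\oversum{g\circ f^{\flat}}\circ(f^{\flat}\underset{A}{\times}id_{F})^{*}=(f^{\flat})^{*}\circ\oversum{g}$, and evaluating both sides at $h\circ g^{\flat}\in\mathrm{Hom}(\llbracket g\rrbracket,A)$ gives $\oversum{g\circ f^{\flat}}\bigl((h\circ g^{\flat})\circ(f^{\flat}\underset{A}{\times}id_{F})\bigr)=\bigl(\oversum{g}(h\circ g^{\flat})\bigr)\circ f^{\flat}=(g\boxtimes h)\circ f^{\flat}$. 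Hence the expression reached above equals $\oversum{f}\bigl((g\boxtimes h)\circ f^{\flat}\bigr)=f\boxtimes(g\boxtimes h)$, as desired.

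The main obstacle is not conceptual but bookkeeping: one must check that each pullback object, each flattening map, and each base-change map $-\underset{A}{\times}id_{F}$ has precisely the domain and codomain needed for the relevant axiom to apply verbatim, and in particular that all the implicit iterated-sum objects (the domain of $h$, the codomain of $(f\boxtimes g)^{\flat}$, the source and target of the Flatten Associativity square) are identified correctly through the Sum Associativity Axiom. Once the types line up, the three axioms compose mechanically; one could equally run the computation in the opposite direction, starting from $f\boxtimes(g\boxtimes h)$ and using naturality first.
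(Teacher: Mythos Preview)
Your proof is correct and uses exactly the same three ingredients as the paper's proof---the Sum Associativity Axiom, the Flatten Associativity Axiom, and naturality of $\oversum{}$---just applied in the reverse order: the paper starts from $f\boxtimes(g\boxtimes h)$, applies naturality first, then Flatten Associativity, then Sum Associativity, whereas you start from $(f\boxtimes g)\boxtimes h$ and run the chain backwards. As you yourself note in your final paragraph, the two computations are mirror images of each other.
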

\begin{proof}
	By definition of $\boxtimes$ we have
	$$f \boxtimes (g \boxtimes h) = \oversum{f} (g \boxtimes h) \circ f^\flat = \oversum{f} (\oversum{g} h \circ g^\flat ) \circ f^\flat $$
	By naturality of dependent sums we have
	$$\oversum{f} (\oversum{g} h \circ g^\flat ) \circ f^\flat  = \oversum{f} \oversum{g \circ f^\flat} h \circ g^\flat \circ (f^\flat \underset{A}{\times}id_F) $$
	By the Flatten Associativity Axiom we have
	$$\oversum{f} \oversum{g \circ f^\flat} h \circ g^\flat \circ (f^\flat \underset{A}{\times}id_F) = \oversum{f} \oversum{g \circ f^\flat} h \circ (f \boxtimes g)^\flat \circ (g \circ f^\flat)^\flat $$
	By the Sum Associativity Axiom we have
	$$\oversum{f} \oversum{g \circ f^\flat} h \circ (f \boxtimes g)^\flat \circ (g \circ f^\flat)^\flat = \oversum{\oversum{f} g \circ f^\flat} h \circ (f \boxtimes g)^\flat = (f \boxtimes g) \boxtimes h$$
\end{proof}

We can associate to every dependent adder $A$ in $\mathscr{C}$ a associated category $Fib(A)$ internal to the presheaf category $\mathrm{PSh}(\mathscr{C})$. We call this the \textit{category of fibrations of $A$}, because in the case $A = Cat$ its global sections are equivalent to the category of small categories and Grothendieck fibrations.

A category in $\mathrm{PSh}(\mathscr{C})$ is the same thing as a presheaf of categories $Fib(A) : \mathscr{C}^{op} \rightarrow Cat$.

For $U \in \mathscr{C}$ define a category $Fib(A)(U)$ as follows:
The objects of $Fib(A)(U)$ are morphisms $x : U \rightarrow A$ in $\mathscr{C}$.
Given two objects $x : U \rightarrow A$ and $y : U \rightarrow A$ we define
$\mathrm{Hom}_{Fib(A)(U)}(x,y) := \{f \in \mathrm{Hom}_{\mathscr{C}}(\llbracket y \rrbracket, A)  | \overset{y}{\sum} f = x \}$.
We call elements of this set $A$-fibrations.
So an $A$-fibration $x \rightarrow y$ is a morphism $f : \llbracket y \rrbracket \rightarrow A$ whose sum is $x$.

For any object $x : U \rightarrow A$ we define $id_x$ to be the map $const_{1_A} : \llbracket x \rrbracket \rightarrow A$ that is the composite $\llbracket x \rrbracket \rightarrow * \overset{1_A}{\rightarrow} A$.

For two fibrations $g : x \rightarrow y$ and $f : y \rightarrow z$ we define their composition by:
$$f \circ g := f \boxtimes g = \oversum{f} (g \circ f^\flat) $$
This makes sense because $y = \oversum{z} f$, so $g$ is a function $\llbracket \oversum{z}f \rrbracket \rightarrow A$ and then the formula is well-defined.

The Sum Associativity Axiom of $A$ implies that the domain of $f \circ g$ is $x$.
The left and right unit axioms of the dependent adder $A$ imply the left and right unit axioms of the category $Fib(A)(U)$. The associativity of the composition of  $Fib(A)(U)$ follows from Lemma \ref{lemmaAssociativity}.

One can now easily check that a morphism $U \rightarrow V$ in $\mathscr{C}$ induces a functor $Fib(A)(V) \rightarrow Fib(A)(U)$, so that $Fib(A)$ becomes a presheaf of categories $Fib(A): \mathscr{C}^{op} \rightarrow Cat$. Equivalently this is also a category internal to $\mathrm{PSh}(\mathscr{C})$.

\subsection{Binary products}\label{sectionBinaryProducts}

Dependent sums imply binary products. For example in the ordinal numbers $Ord$ we have the formula
$$\alpha \cdot \beta = \depsum{i=1}{\beta} \alpha$$
and this same formula makes sense for arbitrary dependent adders.

Given a dependent adder $A$ in $\mathscr{C}$, and two parallel maps $x, y : U \rightarrow A$ in $\mathscr{C}$ we have a canonical map $p_y : \llbracket y \rrbracket \rightarrow U$ and define  $x \cdot y : U \rightarrow A$ by
$$x \cdot y := \oversum{y} x \circ p_y $$

With this we get a multiplication map
$$\mu_U : \mathrm{Hom}_{\mathscr{C}}(U,A) \times \mathrm{Hom}_{\mathscr{C}}(U,A) \rightarrow \mathrm{Hom}_{\mathscr{C}}(U,A) $$
which is natural in $U$.
By the Yoneda lemma we obtain a map $\mu : A \times A \rightarrow A$ in $\mathscr{C}$.
This map explicitly looks as follows:
let $\pi_2 : A \times A \rightarrow A$ be the second projection.
The first projection $\pi_2$ has a dependent sum function $\oversum{\pi_2} : \mathrm{Hom}_{\mathscr{C}}( \llbracket \pi_2 \rrbracket, A) \rightarrow \mathrm{Hom}_{\mathscr{C}}(A \times A, A)$.
Let $\pi_1 : \llbracket \pi_2 \rrbracket \cong F \times A \rightarrow A$ be the first projection.
The multiplication function $\mu : A \times A \rightarrow A$ is given by
$$\mu = \oversum{\pi_2} \pi_1$$

For all the examples from the introduction, this recovers the usual binary multiplication on the respective sets.

\begin{lem}
	$A$ is a monoid object in $\mathscr{C}$ with the multiplication function $\mu : A \times A \rightarrow A$ and the unit $1_A : * \rightarrow A$.\\
	If $A$ is a commutative dependent adder, then it is a commutative monoid object.
\end{lem}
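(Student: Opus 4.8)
The plan is to reduce the statement, via the Yoneda lemma, to elementary facts about the binary operation $x\cdot y=\oversum{y}(x\circ p_y)$ on the hom-sets, and then to read each monoid axiom off one of the dependent-adder axioms. Since $\mu$ is by construction the morphism $A\times A\to A$ corresponding under Yoneda to the maps $\mu_U(x,y)=x\cdot y$ on $\mathrm{Hom}_{\mathscr{C}}(U,A)$ (which are natural in $U$), and $1_A$ corresponds to the constant elements $c_U:=1_A\circ(U\to *):U\to A$, it suffices to check that every triple $(\mathrm{Hom}_{\mathscr{C}}(U,A),\ \cdot\ ,\ c_U)$ is an ordinary monoid, with (in the commutative case) the extra identity $x\cdot y=y\cdot x$; passing these identities back through Yoneda yields precisely the (commutative) monoid-object axioms for $(A,\mu,1_A)$. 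So the work is to verify, for $x,y,z:U\to A$, the associativity law $(x\cdot y)\cdot z=x\cdot(y\cdot z)$, the left unit law $c_U\cdot x=x$, the right unit law $x\cdot c_U=x$, and commutativity.

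Three of these four are routine. The left unit law is immediate: $c_U\cdot x=\oversum{x}(c_U\circ p_x)$, and $c_U\circ p_x$ is exactly the constant family $const_{1_A}:\llbracket x\rrbracket\to A$ of the Right Unit axiom, which states that this sum equals $x$. Associativity is the computation of Lemma~\ref{lemmaAssociativity} rearranged for $\cdot$: expanding $(x\cdot y)\cdot z$ and applying the naturality of the dependent sums along $p_z:\llbracket z\rrbracket\to U$, and expanding $x\cdot(y\cdot z)$ and applying the Sum Associativity Axiom, one finds that both sides equal $\oversum{z}\oversum{y\circ p_z}(x\circ p_z\circ p_{y\circ p_z})$ once one uses the pullback identity $p_y\circ(p_z\underset{A}{\times}id_F)=p_z\circ p_{y\circ p_z}$ and the fact that the flattening map $(y\circ p_z)^\flat$ is a morphism over $U$. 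Commutativity is obtained by rewriting $x\cdot y=\oversum{y}(x\circ p_y)=\oversum{y}\oversum{x\circ p_y}const_{1_A}$ via the Right Unit axiom applied to the family $x\circ p_y$, then applying the Fubini axiom (using that $const_{1_A}\circ switch=const_{1_A}$), then collapsing the inner sum again by the Right Unit axiom to reach $\oversum{x}(y\circ p_x)=y\cdot x$.

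The right unit law $x\cdot c_U=x$ is the one that requires a small detour, and I expect that detour to be the main point. The obstruction is that $c_U:U\to A$ is not literally of the form $\llbracket -\rrbracket$, so the Left Unit axiom does not apply directly to $\oversum{c_U}(x\circ p_{c_U})$. Instead: the law $x\cdot c_U=x$ amounts to the assertion that the morphism $\varphi:A\to A$ representing the (natural) transformation $x\mapsto x\cdot c_U$ equals $id_A$, so by Yoneda it is enough to verify it on the universal element $id_A\in\mathrm{Hom}_{\mathscr{C}}(A,A)$, i.e.\ to prove $id_A\cdot c_A=id_A$. Here $c_A=1_A\circ(A\to *)=p\circ(1_F\circ(A\to *))$ factors through $p$, so there is a section $h:=\langle id_A,\ 1_F\circ(A\to *)\rangle:A\to\llbracket c_A\rrbracket$ of the first projection $p_{c_A}:\llbracket c_A\rrbracket\to A$, that is $p_{c_A}\circ h=id_A$. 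Thus it suffices to prove $\varphi\circ p_{c_A}=p_{c_A}$, i.e.\ $p_{c_A}\cdot c_{\llbracket c_A\rrbracket}=p_{c_A}$, and this is exactly the Left Unit axiom for the base $c_A:A\to A$ and the family $p_{c_A}:\llbracket c_A\rrbracket\to A$: that axiom gives $\oversum{const_{1_A}}(p_{c_A}\circ const_{1_A}^\flat)=p_{c_A}$ with $const_{1_A}=c_{\llbracket c_A\rrbracket}:\llbracket c_A\rrbracket\to A$; the Right Unit axiom shows $\oversum{c_A}const_{1_A}=c_A$ so that $const_{1_A}^\flat$ lands in $\llbracket c_A\rrbracket$, and since $const_{1_A}^\flat$ is a morphism over $A$ we get $p_{c_A}\circ const_{1_A}^\flat=p_{c_A}\circ p_{const_{1_A}}$, turning the axiom into $\oversum{const_{1_A}}(p_{c_A}\circ p_{const_{1_A}})=p_{c_A}$, which is $p_{c_A}\cdot c_{\llbracket c_A\rrbracket}=p_{c_A}$. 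Precomposing with $h$ gives $\varphi\circ id_A=p_{c_A}\circ h=id_A$, hence $\varphi=id_A$; together with the commutativity identity above this shows $(A,\mu,1_A)$ is a monoid object, and a commutative one when $A$ is a commutative dependent adder.
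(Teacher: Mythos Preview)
Your proof is correct and follows essentially the same route as the paper: Yoneda reduction to hom-sets, left unit from the Right Unit axiom, associativity from the Sum Associativity axiom together with the over-$U$ property of the flattening map, and commutativity via Right Unit plus Fubini. For the right unit law you use the same key idea as the paper---build a section of $p_c$ from $1_F$, apply the Left Unit axiom on $\llbracket c\rrbracket$, and pull back along the section---except that you first invoke Yoneda to reduce to $U=A$; this extra step is harmless (you end up working over $\llbracket c_A\rrbracket$ anyway) but unnecessary, since the section construction works over an arbitrary $U$ just as the paper does it.
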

\begin{proof}
	By the Yoneda lemma it suffices to show for every $U \in \mathscr{C}$ that $\mathrm{Hom}_{\mathscr{C}}(U,A)$ is a monoid, respectively a commutative monoid.
	
	The right unit axiom of the dependent adder $A$ implies the left unit axiom of the monoid $\mathrm{Hom}_{\mathscr{C}}(U,A)$, because for $x : U \rightarrow A$ we have
	$$const_{1_A} \cdot x = \oversum{x} const_{1_A} = x$$
	The Sum Associativity Axiom of the dependent adder implies the associativity of the monoid in the following way: For $x, y, z : U \rightarrow A$ we have a commutative diagram.
	$$\xymatrix{ \llbracket y \circ p_z \rrbracket\ar[d]_{(p_z \underset{A}{\times} id_F)}  \ar[rr]^{(y \circ p_z)^\flat} \ar[dr]^(.6){p_{y \circ p_z}} & & \llbracket y \cdot z \rrbracket \ar[d]^{p_{y \cdot z}} \\
	\llbracket y \rrbracket \ar@/_1.5pc/[rr]_{p_y} &	\llbracket z \rrbracket \ar[r]^{p_z} & U } $$
Here the upper right part commutes, because flattening maps commute over $U$. The lower part commutes because both maps are the projection $U \underset{A}{\times} F \underset{A}{\times} F \rightarrow U$.
	Using this diagram and the Sum Associativity Axiom we get
	$$x \cdot (y \cdot z) = \oversum{\oversum{z} y \circ p_z} x \circ p_{y \cdot z} = \oversum{z} \oversum{y \circ p_z} x \circ p_{y \cdot z} \circ (y \circ p_z)^\flat = \oversum{z} \oversum{y \circ p_z} x \circ p_y \circ (p_z \underset{A}{\times} id_F) = $$ $$ = \oversum{z} (\oversum{y} x \circ p_y) \circ p_z  = (x \cdot y) \cdot z$$
	
	For the right unit axiom of the monoid we need to work a bit harder, because we can only directly apply the left unit axiom of the dependent adder $A$ if $U$ is of the form $\llbracket w \rrbracket$ for some $w \in \mathscr{C}/A$. Thankfully we can use the naturality of the dependent sums and the unit $1_F : * \rightarrow F$ to translate the problem into such a situation.
	
	Let $c := const_1 : U \rightarrow A$ be the constant $1_A$ map.
	Consider the diagram
	$$\xymatrix{ U \ar@{..>}[dr]^s \ar[r] \ar@/_1.5pc/[rdd]_{id_U} & \textbf{*} \ar[dr]^{1_F} & \\
	             & \llbracket c \rrbracket \ar[r] \ar[d]^{p_c} & F \ar[d]_p \\
                 & U \ar[r]^{c} & A  }$$
    The lower right square is a pullback. The outer diragram commutes. So we get $s : U \rightarrow \llbracket c \rrbracket$ with $p_c \circ s = id_U$.
    Let $\tilde{c} := c \circ p_c$ be the constant $1_A$ function $\llbracket c \rrbracket \rightarrow A$.
    We have a commutative triangle
    $$\xymatrix{ U \ar[dr]_{\tilde{c} \circ s} \ar[rr]^s & & \llbracket c \rrbracket \ar[dl]^{\tilde{c}} \\
                 & A } $$
    so $s$ is a morphism $\tilde{c} \circ s \rightarrow \tilde{c}$ in $\mathscr{C}/A$.
    The naturality of dependent sums now implies that for every $z : \llbracket \tilde{c} \rrbracket \rightarrow A$ we have
    $$(\oversum{\tilde{c}} z) \circ s = \oversum{\tilde{c} \circ s} (z \circ (s \underset{A}{\times} id_F )) $$
    This in particular implies that for every $y : \llbracket c \rrbracket \rightarrow A$ we have $$(y \cdot \tilde{c}) \circ s=  (y \circ s) \cdot (\tilde{c} \circ s) $$
    
    Now take $x : U \rightarrow A$. We want to show that $x \cdot c = x$.
    Let $y := x \circ p_c$.
    We have $y \cdot \tilde{c} = \oversum{\tilde{c}} const_y = \oversum{\tilde{c}} const_y \circ \tilde{c}^\flat = y$ by the left unit axiom of the dependent adder $A$.
    Then $x = y \circ s = (y \cdot \tilde{c}) \circ s = (y \circ s) \cdot (\tilde{c} \circ s) = x \cdot c$.
    So $\mathrm{Hom}_{\mathscr{C}}(U,A)$ is a monoid.
	
	If the dependent adder is commutative, then the Fubini axiom and the left unit axiom of the monoid imply the commutativity of the monoid.
	$$x \cdot y = (1_A \cdot x) \cdot y = \depsum{}{y} \depsum{}{x \circ p_y} const_{1_A} =  \depsum{}{x} \depsum{}{y \circ p_x} const_{1_A} = y \cdot x$$
	
	With the Yoneda lemma it follows that $A$ is a monoid, respectively a commutative monoid, in $\mathscr{C}$.
\end{proof}

In every dependent adder $A$ we always have the right distributive law
$$y \cdot \depsum{i}{x} f(i) = \depsum{i}{x} y \cdot f(i) $$
but not necessarily the left distributive law. For example in $Ord$ we have
$(1 + 1) \cdot  \omega \neq (1 \cdot \omega) + (1 \cdot \omega)$.

In all examples of dependent adders in the introduction, the binary multiplication here recovers the usual binary multiplication on these sets. For example in $Cat$ the product $I \cdot J$ is the cartesian product of categories $I \times J$.

\section{Examples}\label{sectionExamples}

\subsection{Natural Numbers} \label{naturals}
Let $\mathscr{C} = Set$ be the category of small sets.
Let $A = \mathbb{N}$ be the set of natural numbers.
For $n \in \mathbb{N}$ let $[n] := \{1,\dots, n\}$ be a set with $n$ elements.
Let $F := \underset{n \in \mathbb{N}}{\coprod} [n] = \{(n,i) | n, i \in \mathbb{N}, 1 \leq i \leq n  \}$.
Let $p : F \rightarrow A$ be the map sending $(n,i)$ to $n$.
The fiber of $p$ over $n \in \mathbb{N}$ is the set $ [n]$. Let $1_F : * \rightarrow F$ be the point $(1,1) \in F$.
For any function $x : U \rightarrow \mathbb{N}$ can write $\llbracket x \rrbracket = U \underset{A}{\times} F$ as a coproduct:
$$\llbracket x \rrbracket = \underset{u \in U}{\coprod}[x(u)] $$

We define for every $x : U \rightarrow \mathbb{N}$ a natural function 
$$\oversum{x} : \mathrm{Hom}_{Set}( \llbracket x \rrbracket, \mathbb{N}) \rightarrow \mathrm{Hom}_{Set}(U,\mathbb{N}) $$
by sending $f : \llbracket x \rrbracket \rightarrow \mathbb{N}$ to the function $\oversum{x}f : U \rightarrow \mathbb{N}$ defined by
$$(\oversum{x} f) (u) := \depsum{i=1}{x(u)} f(u,i) $$

In the case $U = *$ this is the map $\mathrm{Hom}_{Set}( [x], \mathbb{N}) \rightarrow \mathbb{N}$ that sends $f$ to $\depsum{i=1}{x} f(i)$.

Next, for every $x : U \rightarrow \mathbb{N}$ and $f : \llbracket x \rrbracket \rightarrow \mathbb{N}$ we define the flattening map
$$f^\flat : \underset{u \in U}{\coprod} \underset{i \in [x(u)]}{\coprod} [f(u,i)] \rightarrow \underset{u \in U}{\coprod} [  \depsum{i=1}{x(u)} f(u,i)  ]  $$
by
$$f^\flat(u,i,j) := (u, j + \depsum{k=1}{i-1} f(u,k)) $$

The Right Unit axiom $\depsum{i=1}{x(u)}1=x(u)$ is obvious. 

The Left Unit axiom follows from the following calculation:
$$ (const_1 \boxtimes f)(u,i) = \depsum{j=1}{1} f(u, j + \depsum{k=1}{i-1} 1 ) = f(u,1 + i-1) = f(u,i)$$

The Sum Associativity Axiom
$$\depsum{j=1}{\depsum{i=1}{x(u)} f(u,i)} g(u,j) = \depsum{i=1}{x(u)} \depsum{j=1}{f(u,i)}  g(u, j + \depsum{k=1}{i-1} f(u,k) )  $$
is easily verified by induction over $n$.

For the Flatten Associativity Axiom we need to show that the following diagram commutes
$$\xymatrix{ \underset{u \in U}{\coprod} \underset{i \in [x(u)]}{\coprod} \underset{j \in [ f(u,i) ]  }{\coprod} [g(f^\flat(u,i,j)) ] \ar[r]^(.55){ (g \circ f^\flat)^\flat  } \ar[d]_{ f^\flat \underset{A}{\times} id_F } & \underset{u \in U}{\coprod}   \underset{i \in [x(u)]}{\coprod} [ \depsum{j=1}{f(u,i)} g(f^\flat(u,i,j))   ]  \ar[d]^{ (f \boxtimes g)^\flat } \\
   \underset{u \in U}{\coprod} \underset{ k \in [\depsum{i=1}{x(u)} f(u,i)]  }{\coprod} [g(u,k)]  \ar[r]^{g^\flat} &  \underset{u \in U}{\coprod} [ \depsum{i=1}{x(u)} \depsum{j=1}{f(u,i)} g(f^\flat(u,i,j))  ]  }   $$

The commutativity follows by the following calculation:
$$g^\flat((f^\flat \underset{A}{\times} id_F)  (u,i,j,k ) ) = g^\flat(f^\flat(u,i,j),k  ) = g^\flat(u, j + \depsum{a=1}{i-1} f(u,a),k ) = $$ 
$$ = (u,k + \depsum{b=1}{j-1 + \depsum{a=1}{i-1} f(u,a)} g(u,b)) = $$ $$= (u,k + (\depsum{b=1}{j-1} g(u,b + \depsum{a=1}{i-1} f(u,a))) + \depsum{a=1}{i-1} \depsum{b=1}{f(u,a)} g(u,b + \depsum{c=1}{a-1} f(u,c) ) ) = $$ $$ = (u, k + (\depsum{b=1}{j-1} g(f^\flat(u,i,b))) + \depsum{a=1}{i-1} (f \boxtimes g)(u,a) )= $$
$$= (f \boxtimes g)^\flat(u,i, k + \depsum{b=1}{j-1} g(f^\flat(i,b)) ) = (f \boxtimes g)^\flat((g \circ f^\flat)^\flat (u,i,j,k) ) $$
which means the Flatten Associativity Axiom is satisfied and $\mathbb{N}$ is a dependent adder.
The dependent adder is commutative because the Fubini axiom $\depsum{i=1}{x(u)} \depsum{j=1}{y(u)} f(u,i,j) = \depsum{j=1}{y(u)} \depsum{i=1}{x(u)} f(u,i,j) $ is obviously satisfied.

In the category of sets $\mathscr{C} = Set$ the entire $u$ variable is kind of unnecessary, and it is enough to define dependent sums, flattening maps and prove the axioms for $U = *$. The $u$ variable is however important in some other categories $\mathscr{C}$.

Let $FinSet$ be the full subcategory of $Set$ on sets of the form $ [n]$ for $n \in \mathbb{N}$.
The presheaves of categories $Fib(\mathbb{N}) : Set^{op} \rightarrow Cat$ is representable, and it is represented by the category $FinSet$.
By this we mean that for every $S \in Set$ we have a strict isomorphism of categories
$$Fib(\mathbb{N})(S) \cong FinSet^S $$
which is natural in $S$.

More precisely there is a bijection $$\Phi: \mathrm{Hom}_{FinSet}([n],[m]) \rightarrow \mathrm{Hom}_{Fib(\mathbb{N})(*)}(n,m) $$
sending $f : [n] \rightarrow [m]$ to the function $\Phi(f) : [m] \rightarrow \mathbb{N}$ that sends $x \in [m]$ to the cardinality of the fiber of $f$ over $x$.
$$\Phi(f)(x) := |f^{-1}(\{x\})| $$

It preserves composition in the sense that $\Phi(g \circ f) = \Phi(g) \boxtimes \Phi(f)$.

\subsection{Non-negative real numbers} \label{reals}

Let $\mathscr{C} = Top_{mtr}$ be the category of metrizable topological spaces and continuous maps between them. The category $\mathscr{C}$ has pullbacks which agree with the usual pullbacks of topological spaces.

Let $A := \bbR$ be the space of non-negative real numbers with euclidean topology.

Let $F := \{(x,y) \in \mathbb{R}^2 | 0 \leq x, 0 \leq y \leq x \}$ be equipped with the subspace topology from $\mathbb{R}^2$.
Define $p : F \rightarrow \bbR$, $p(x,y) := x$.
The fiber of $p$ over $x$ is the interval $[0,x]$.

We let $1_F : * \rightarrow F$ be the point $(1,1) \in F$.

For $x : U \rightarrow \bbR$ there is a homeomorphism
$$ \llbracket x \rrbracket \cong \{ (u, a) \in U \times \bbR | 0 \leq a \leq x(u) \} $$
and from now on we redefine $\llbracket x \rrbracket$ to mean the topological space $\{ (u, a) \in U \times \bbR | 0 \leq a \leq x(u) \}$.

For $x : U \rightarrow \bbR$ define
$\oversum{x} : \mathrm{Hom}_{Top_{mtr}}(\llbracket x \rrbracket, \bbR) \rightarrow \mathrm{Hom}_{Top_{mtr}}(U, \bbR) $
on $f: \llbracket x \rrbracket \rightarrow \bbR$ by
$$(\oversum{x} f)(u) := \underset{0}{\overset{x(u)}{\int}} f(u, t) dt$$
This integral always exists and is finite, because $f$ is continuous and for every $u \in U$ the interval $[0, x(u)]$ is compact. The map is easily seen to be natural in $x$, but we have to show that $\oversum{x}f$ is actually a continuous function. For this we will need to use the fact that $U$ is metrizable.

\begin{lem}\label{lemmaIntegralContinuous}
	The function $g:= \oversum{x} f : U \rightarrow \bbR$ is continuous.
\end{lem}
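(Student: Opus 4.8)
The plan is to show $g = \oversum{x} f$ is continuous at an arbitrary point $u_0 \in U$ by a standard $\varepsilon$--$\delta$ estimate on the integral, splitting the difference $g(u) - g(u_0)$ into a part coming from the variation of the integrand and a part coming from the variation of the domain of integration $[0, x(u)]$ versus $[0, x(u_0)]$. First I would fix $u_0$, set $M := x(u_0)$, and choose a radius $r > 0$ so that on the ball $\bar B(u_0, r)$ (using the metric on $U$, which is where metrizability enters) the continuous function $x$ is bounded, say $x(u) \le M + 1 =: M'$ for $u \in \bar B(u_0, r)$. The set $K := \{(u,t) \in \llbracket x \rrbracket : u \in \bar B(u_0,r)\}$ is contained in the compact set $\bar B(u_0, r) \times [0, M']$ intersected with the closed set $\llbracket x\rrbracket$, hence is compact; therefore $f$ restricted to $K$ is uniformly continuous and bounded, say $|f| \le C$ on $K$.

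Next, given $\varepsilon > 0$, I would write, for $u \in \bar B(u_0, r)$,
$$
g(u) - g(u_0) = \underset{0}{\overset{x(u)}{\int}} f(u,t)\,dt - \underset{0}{\overset{x(u_0)}{\int}} f(u_0,t)\,dt
= \underset{0}{\overset{m}{\int}} \big(f(u,t) - f(u_0,t)\big)\,dt + \underset{m}{\overset{x(u)}{\int}} f(u,t)\,dt - \underset{m}{\overset{x(u_0)}{\int}} f(u_0,t)\,dt,
$$
where $m := \min(x(u), x(u_0))$. The first term is bounded by $M' \cdot \sup_{t \le m} |f(u,t) - f(u_0,t)|$, which is small by uniform continuity of $f$ on $K$ once $u$ is close enough to $u_0$; the two remaining terms are each bounded by $C \cdot |x(u) - x(u_0)|$, which is small by continuity of $x$ at $u_0$. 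Choosing $\delta \le r$ small enough to make each of the three contributions less than $\varepsilon/3$ gives $|g(u) - g(u_0)| < \varepsilon$.

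The main obstacle is making the uniform-continuity step rigorous: one needs a single compact set on which $f$ is uniformly continuous that simultaneously controls all the integrands $t \mapsto f(u,t)$ for $u$ near $u_0$, and one must be slightly careful that $(u,t) \in K$ whenever $u \in \bar B(u_0,r)$ and $t \le m \le x(u)$ — which holds by definition of $\llbracket x \rrbracket$. Metrizability of $U$ is used precisely to get the compact neighborhood $\bar B(u_0, r)$; without it one would have to argue more carefully about the existence of a compact neighborhood of $u_0$. Everything else is a routine estimate on Riemann integrals, so I would not belabor the constants.
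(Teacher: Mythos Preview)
Your proof has a genuine gap at the compactness step. You claim that $\bar B(u_0,r)\times[0,M']$ is compact, and you say explicitly that ``metrizability of $U$ is used precisely to get the compact neighborhood $\bar B(u_0,r)$''. But metrizability does \emph{not} imply local compactness: for instance if $U$ is an infinite-dimensional Banach space (which is certainly metrizable) then no closed ball in $U$ is compact. So in general your set $K$ is not compact, you cannot invoke uniform continuity of $f$ on $K$, and the estimate for the first integral term breaks down.

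The paper circumvents exactly this obstacle by a different use of metrizability: rather than seeking a compact neighborhood of $u_0$, it uses the fact that every metrizable space is compactly generated. Thus to show that $g^{-1}(B_{\epsilon}(g(u_0)))$ is open it suffices to show that its intersection with each compact subset $K\subseteq U$ is open in $K$. For such a $K$ the set $K\underset{\bbR}{\times}F\subseteq\llbracket x\rrbracket$ is genuinely compact, so $f$ is uniformly continuous and bounded there, and then an $\epsilon$--$\delta$ estimate essentially identical to yours (splitting the integral into a domain-variation part and an integrand-variation part) goes through. In other words, your analytic estimate is fine; what is missing is the reduction to an honestly compact piece of $U$, and the correct tool for that reduction is compact generation, not local compactness.
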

\begin{proof}
	Choose a metric $d$ on $U$, and put on $\llbracket x \rrbracket = U \underset{\bbR}{\times} F$ the metric that is the sum of the metric $d$ and the Manhattan distance on $F$.
	Take $u \in U$ and $\epsilon_1 > 0$.
	We want to show that the set $V:= g^{-1}(B_{\epsilon_1}(g(u)))$ is open in $U$.
	Since $U$ is metrizable, $U$ is also a compactly generated topological space. So to show that $V$ is open, it suffices to show for every compact subset $K \subseteq U$ that $K \cap V$ is open in $K$.
	So take an arbitrary compact subset $K \subseteq U$, and some $v \in K \cap V$.
	We need to show there exists $\delta > 0$ such that $K \cap B_\delta(v) \subseteq K \cap V$.
	Let $\epsilon_2 := \epsilon_1 - |g(v) - g(u)|$. We have $\epsilon_2 > 0$ and $B_{\epsilon_2}(g(v)) \subseteq B_{\epsilon_1}(g(u))$.
	Choose $\theta_1 > 0$ with $\theta_1 \cdot x(v) \leq \frac{\epsilon_2}{2}$.
	The set $K \underset{\bbR}{\times} F$ is a compact subset of $\llbracket x \rrbracket$.
	Therefore the restricted map $f : K \underset{\bbR}{\times} F \rightarrow \bbR$ is uniformly continuous. So there exists a $\delta_1 > 0$ such that for all $x, y \in K \underset{\bbR}{\times} F$, if $d(x,y) < \delta_1$ then $d(f(x),f(y)) < \theta_1$.
	Let $M$ be the maximum of $f : K \underset{\bbR}{\times} F \rightarrow \bbR$. Choose $\theta_2 > 0$ such that $\theta_2 \cdot M \leq \frac{\epsilon_2}{2}$ and $\theta_2 \leq \delta_1$.
	Since $x : U \rightarrow \bbR$ is continuous, there exists $\delta_2 > 0$ such that $x(B_{\delta_2}(v)) \subseteq B_{\theta_2}(x(v))$. Let $\delta$ be the minimum of $\delta_1$ and $\delta_2$.
	Take $w \in K \cap B_\delta(v)$. We claim that $w \in V$. This means that $g(w) \in B_{\epsilon_1}(g(u))$. To show that it suffices to show $g(w) \in B_{\epsilon_2}(v)$. We have
	$$g(w) = \underset{0}{\overset{x(w)}{\int}} f(w , t ) dt = \underset{x(v)}{\overset{x(w)}{\int}} f(w , t )  dt + \underset{0}{\overset{x(v)}{\int}} f(w  , t ) dt  $$
	Let's look at the first term of this sum:
	$$| \underset{x(v)}{\overset{x(w)}{\int}} f(w , t ) dt | \leq |x(w) - x(v) | \cdot M \leq \theta_2 \cdot M \leq \frac{\epsilon_2}{2}  $$
	With this we get
	$$|g(w) - g(v) | \leq \frac{\epsilon_2}{2} + | \underset{0}{\overset{x(v)}{\int}} f(w , t )    - f(v, t)  dt   | \leq $$
	$$\leq \frac{\epsilon_2}{2} + x(v) \cdot \theta_1 \leq \epsilon_2 $$
	so $g = \oversum{x} f $ is continuous.
\end{proof}

Given $x : U \rightarrow \bbR$ and $f : \llbracket x \rrbracket \rightarrow \bbR$ the flattening map is defined by
$$f^\flat : \underset{u \in U}{\coprod} \underset{a \in [0, x(u)]}{\coprod} [0, f(u,a)]   \rightarrow \underset{u \in U}{\coprod} [0, \underset{0}{\overset{x(u)}{\int}} f(u,t)dt  ] $$
$$f^\flat(u,a,b) := (u, \underset{0}{\overset{a}{\int}} f(u,t) dt  )$$
Here the $\coprod$ coproducts that we have written domain and codomain of $f^\flat$ are not literally coproducts of topological spaces, but they are coproducts of sets equipped with the subspace topology from $U \times \bbR \times \bbR$, respectively $U \times \bbR$.
Also note that $f^\flat(u,a,b)$ does not depend on $b$ at all, so if we have continuous functions $x : U \rightarrow \bbR$, $f: \llbracket x \rrbracket \rightarrow \bbR$ and $g : \llbracket \oversum{x}f \rrbracket \rightarrow \bbR$, then $$(f \boxtimes g)(u,a) = \underset{0}{\overset{f(u,a)}{\int}} g(f^\flat(u,a,t))dt = f(u,a) \cdot g( \underset{0}{\overset{a}{\int}} f(u,s)ds) $$

We now verify that $\bbR$ satisfies the axioms of a commutative dependent adder.

The Right Unit axiom follows from the fact that $$\underset{0}{\overset{x}{\int}} 1 dt = x$$
The Left Unit axiom follows from 
$$ (const_{1_A} \boxtimes f)(u,a) = 1 \cdot f(u, \underset{0}{\overset{a}{\int} 1} ) = f(u,a)$$

For the Sum Associativity Axiom we need to show that for every continuous function $f: \llbracket x \rrbracket  \rightarrow \bbR$ and $g : \llbracket \oversum{x}f \rrbracket \rightarrow \bbR$ we have
$$ \underset{0}{\overset{  \underset{0}{\overset{x}{\int}} f(u,t)dt  }{\int}} g(u,s) ds =  \underset{0}{\overset{x}{\int}}  f(u,t) \cdot g(u,  \underset{0}{\overset{t}{\int}} f(u,z)dz )  dt  $$

This follows through \enquote{integration by substitution} with an antiderivative of $f$.
Let $h:\llbracket x \rrbracket \rightarrow \bbR$, $h(u,t) := \underset{0}{\overset{t}{\int}} f(u,s)ds$. Then $h$ is continuously differentiable in the $t$ variable with $\frac{d}{dt}h(u,t) = f(u,t)$, and we get
$$\underset{0}{\overset{  \underset{0}{\overset{x}{\int}} f(u,t)dt  }{\int}} g(u,s) ds = \underset{h(u,0)}{\overset{  h(u,x)  }{\int}} g(u,s) ds = \underset{0}{\overset{x}{\int}}  f(u,t) \cdot g(u,h(u,t))  dt $$

The Flatten Associativity Axiom follows from
$$g^\flat((f^\flat \underset{A}{\times} id_F) (u,a,b,c)) = g^\flat(u,\underset{0}{\overset{a}{\int}} f(u,t)dt , c  ) = \underset{0}{\overset{\underset{0}{\overset{a}{\int}} f(u,t)dt}{\int}} g(u,t)dt = $$
$$ = \underset{0}{\overset{a}{\int}}  (f \boxtimes g)(u,t) dt = (f \boxtimes g)^\flat(u, a, \underset{0}{\overset{b}{\int}} g(f^\flat(i,t)) dt) = (f \boxtimes g)^\flat((g \circ f^\flat)^\flat(u,a,b,c)  )  $$
so $\bbR$ is a dependent adder.

It is a commutative dependent adder because of Fubini's theorem $$\underset{0}{\overset{x}{\int}} \underset{0}{\overset{y}{\int}} f(u,s,t) ds dt = \underset{0}{\overset{y}{\int}} \underset{0}{\overset{x}{\int}} f(u,s,t) dt ds$$

\subsection{P-Adic integers}

Like in Section \ref{reals}, we take $\mathscr{C} = Top_{mtr}$ the category of metrizable spaces.

Let $q$ be a prime number.
Let $A := \mathbb{Z}_q$.
Let $F := \mathbb{Z}_q \times \mathbb{Z}_q$.
Let $p : F \rightarrow A$ be the second projection $\mathbb{Z}_q \times \mathbb{Z}_q \rightarrow \mathbb{Z}_q$.
The fiber of $p$ over $x \in \mathbb{Z}_q$ is $\mathbb{Z}_q$.
Let $1_F: * \rightarrow F$ be the point $(1,1) \in \mathbb{Z}_q \times \mathbb{Z}_q$.
For $x : U \rightarrow \mathbb{Z}_q$ there is a homeomorphism $\llbracket x \rrbracket \cong U \times \mathbb{Z}_q$, so by $\llbracket x \rrbracket$ we will from now on simply mean $U \times \mathbb{Z}_q$.

To define the dependent sums we use the following lemma
\begin{lem}\label{lemmaZpsums}
	If $f : \mathbb{Z}_q \rightarrow \mathbb{Z}_q$ is a continuous function,
	and $g : \mathbb{N} \rightarrow \mathbb{Z}_q$ is the function
	$g(n) := \depsum{i=1}{n} f(i)$
	then $g$ is continuous with respect to the $q$-adic topology on $\mathbb{N}$.
\end{lem}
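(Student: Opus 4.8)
The plan is to prove a slightly stronger statement than claimed, namely that $g$ is \emph{uniformly} continuous for the $q$-adic metrics on $\mathbb{N}$ and $\mathbb{Z}_q$; this implies the asserted continuity, and since $\mathbb{N}$ is dense in the complete metric space $\mathbb{Z}_q$ it moreover produces the unique continuous extension $\bar g \colon \mathbb{Z}_q \to \mathbb{Z}_q$ with $\bar g(x) = \lim_n g(x_n)$ for every $x_n \to x$, which is what is actually needed to make the $q$-adic dependent sum well defined. So it suffices to show: for every $N \in \mathbb{N}$ there is an $M \in \mathbb{N}$ such that $a \equiv b \pmod{q^M}$ (with $a, b \in \mathbb{N}$) forces $g(a) \equiv g(b) \pmod{q^N}$.

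First I would use that $\mathbb{Z}_q$ is compact, so the continuous function $f$ is uniformly continuous: there is an $M_0$ (depending on $N$) with $i \equiv i' \pmod{q^{M_0}} \implies f(i) \equiv f(i') \pmod{q^N}$ for all $i, i' \in \mathbb{Z}_q$. Thus $f(i) \bmod q^N$ depends only on $i \bmod q^{M_0}$. Put $T := \sum_{c=0}^{q^{M_0}-1} f(c) \in \mathbb{Z}_q$; then for every integer $c_0$ the block sum $\sum_{j=1}^{q^{M_0}} f(c_0+j)$ is congruent to $T$ modulo $q^N$, because $c_0+1, \dots, c_0+q^{M_0}$ is a complete residue system modulo $q^{M_0}$ and $f$ is constant modulo $q^N$ on each such class. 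Now set $M := M_0 + N$. Given $a \equiv b \pmod{q^M}$ with $a \le b$, write $b - a = q^{M_0} s$; then $q^N \mid s$ since $q^{M_0+N} \mid b - a$. Splitting $g(b) - g(a) = \sum_{i=a+1}^{b} f(i)$ into $s$ consecutive blocks of length $q^{M_0}$ and applying the previous sentence to each block yields $g(b) - g(a) \equiv sT \equiv 0 \pmod{q^N}$, which is the desired implication, and hence the lemma.

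The only real work lies in the last step: correctly partitioning the index range $\{a+1, \dots, b\}$ into the $s$ blocks on which $f$ is \enquote{constant modulo $q^N$}, and observing that the number $s$ of blocks is divisible by $q^N$, so that the (a priori unknown) common value $T$ of the blocks is irrelevant. This is pure bookkeeping; apart from the uniform continuity of $f$ no analytic estimate is required, the non-Archimedean nature of $\mathbb{Z}_q$ doing everything else.
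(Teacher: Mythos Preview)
Your proof is correct and follows essentially the same approach as the paper: both use uniform continuity of $f$ on the compact space $\mathbb{Z}_q$ to obtain an exponent $M_0$ (the paper calls it $K$), set $M = M_0 + N$, and then split $g(b)-g(a)=\sum_{i=a+1}^{b} f(i)$ into blocks whose contributions cancel modulo $q^N$. The only cosmetic difference is the block organization---you take $s$ blocks of length $q^{M_0}$ each congruent to a fixed $T$, whereas the paper takes $q^N$ blocks of length $q^{M_0}z$ and notes that the $q^N$ identical contributions yield a multiple of $q^N$---but the underlying arithmetic is the same.
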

\begin{proof}
	Take $N \in \mathbb{N}$. We need to show that there exists $M \in \mathbb{N}$ such that for all $x, y \in \mathbb{N}$ with $x-y$ divisible by $q^M$ we have that $g(x) - g(y)$ is divisble by $q^N$.
	
	Since $\mathbb{Z}_q$ is compact, $f$ is uniformly continuous. So there exists a $K \in \mathbb{N}$, such that for all $x, y \in \mathbb{Z}_q$ if $x- y \in q^K\mathbb{Z}_q$ then $f(x)-f(y) \in q^N\mathbb{Z}_q$.
	Let $M := K + N$.
	Take $x, y \in \mathbb{N}$ with $x - y = q^M\cdot z$ for some $z \in \mathbb{N}$. Without loss of generality assume $x \geq y$.
	Then we have
	$$g(x) - g(y) = \depsum{i=y+1}{x} f(i) = \depsum{i=1}{x-y} f(i+y) = \depsum{i=1}{q^M z} f(i+y) = \depsum{i=1}{q^N} \depsum{j=1}{q^K z} f(j + y + (i-1)q^K z) $$
	
	Now for all $i,j$ there exists $w_{i,j}$ such that $f(j + y + (i-1)q^K z) = f(j + y) + q^Nw_{i,j}$, and then
	$$\depsum{i=1}{q^N} \depsum{j=1}{q^K z} f(j + y + (i-1)q^K z) = q^N( \depsum{j=1}{q^K z} f(j + y) ) + q^N( \depsum{i=1}{q^N} \depsum{j=1}{q^K z} w_{i,j})$$ 
	This is divisible by $q^N$ so $g$ is continuous.
\end{proof}

In the above lemma, since $\mathbb{Z}_q$ is Cauchy-complete, the map $g : \mathbb{N} \rightarrow \mathbb{Z}_q$ induces a map from the Cauchy completion of $\mathbb{N}$ to $\mathbb{Z}_q$. Since the Cauchy completion of $\mathbb{N}$ with respect to the $q$-adic topology is $\mathbb{Z}_q$, we thus get a continuous map $\tilde{g} : \mathbb{Z}_q \rightarrow \mathbb{Z}_q$, $\tilde{g}(x) = \underset{\underset{x_n \in \mathbb{N}}{x_n \rightarrow x}}{\lim} g(x)$.
We write $\depsum{i=1}{x} f(i)$ for $\tilde{g}(x)$.
$$ \depsum{i=1}{x} f(i) :=  \underset{\underset{x_n \in \mathbb{N}}{x_n \rightarrow x}}{\lim} \depsum{i=1}{x_n} f(i)$$

We can now define the dependent sum function of $\mathbb{Z}_q$.
For any continuous function $x : U \rightarrow \mathbb{Z}_q$ define
$$\oversum{x} : \mathrm{Hom}_{Top_{mtr}}(U \times \mathbb{Z}_q, \mathbb{Z}_q) \rightarrow \mathrm{Hom}_{Top_{mtr}}(U , \mathbb{Z}_q)  $$
by $$(\oversum{x}f)(u) := \depsum{i=1}{x(u)} f(u,i) $$

The continuity of this function can be proven using an argument very similar to the one from Lemma \ref{lemmaIntegralContinuous}.

For any $f : U \times \mathbb{Z}_q \rightarrow \mathbb{Z}_q$ we define the flattening function of $f$ by
$$f^\flat : U \times \mathbb{Z}_q \times \mathbb{Z}_q \rightarrow \mathbb{Z}_q, f^\flat(u,i,j) := j + \depsum{k=1}{i-1} f(u,k) $$

The right and left unit axiom, both Associativity Axioms and Fubini axiom follow exactly like for the natural numbers $\mathbb{N}$ from Section \ref{naturals}.

\subsection{Small Categories}\label{categories}

We first explicitly construct oplax colimits of categories.
We then show that $Cat$ is a dependent adder with oplax colimits as dependent sums.
We then show that $Cat^{op}$ is a dependent adder with lax colimits / Grothendieck constructions as dependent sums.

\subsubsection{Explicit construction of oplax colimits}

For any functor $f : I \rightarrow Cat$ define its \textit{oplax colimit} $\oplaxcolim{i \in I} f(i)$  to be the following category:
\begin{enumerate}
	\item 
	The objects are pairs $(i,j)$ with $i \in I$, $j \in f(i)$.
	\item 
	The morphism $(i,j) \rightarrow (i^\prime, j^\prime)$ are pairs of morphisms $(\alpha,\beta)$ where $\alpha : i \rightarrow i^\prime$ in $I$ and $\beta : f(\alpha)(j) \rightarrow j^\prime$ in $f(i^\prime)$.
\end{enumerate}

The universal property of this particular oplax colimit construction is as follows:
For any $i \in I$ we have a functor $\iota_i : f(i) \rightarrow \oplaxcolim{j \in I} f(j)$. For every $\alpha : i \rightarrow i^\prime$ in $I$ we have a natural transformation $\iota_{\alpha} : \iota_i \rightarrow \iota_{i^\prime} \circ f(\alpha)$, such that $\iota_{id_i} = id_{\iota_i}$ and $\iota_{\beta \circ \alpha} = (\iota_{\beta}*f(\alpha)) \circ \iota_{\alpha}$.

Whenever we have another object $X$ together with for every $i \in I$ a functor $\omega_i : f(i) \rightarrow X$, and for every $\alpha : i \rightarrow i^\prime$ in $I$ a natural transformation $\omega_\alpha: \omega_i \rightarrow \omega_{i^\prime} \circ f(\alpha) $ such that $\omega_{id_i} = id_{\omega_i}$ and $\omega_{\beta \circ \alpha } = (\omega_\beta * f(\alpha)) \circ \omega_\alpha$ then there is a unique functor $\omega : \oplaxcolim{j \in I} f(j) \rightarrow X$, such that $\omega_i = \omega \circ \iota_i$ and $\omega_\alpha = \omega * \iota_\alpha$.

Usually the universal property of oplax colimits requires the functor $\omega$ to not be unique, but only be unique up to isomorphism. However the particular category we constructed above satisfies the stricter universal property where $\omega$ is unique up to equality.
See \cite[Section 2]{bird1989flexible} for the usual theory of lax and oplax limits.

\subsubsection{Oplax colimits as sums}

We postulate two strongly inaccessible cardinals $\kappa_0 < \kappa_1$.
A category is called \textit{small} if it is smaller than $\kappa_0$.
A category is called \textit{moderately small} if it is smaller than $\kappa_1$.

The $1$-category of small categories is denoted $Cat$.
The $1$-category of moderately small categories is denoted $CAT$.

For all moderately small categories $I, J$ let $\mathrm{Hom}_{hCAT}(I,J)$ be a skeleton of the category of functors $\mathrm{Hom}_{CAT}(I,J)$. So for every functor $F : I \rightarrow J$ there exists exactly one functor $\mathrm{Ho}(F) : I \rightarrow J$ such that $F \cong \mathrm{Ho}(F)$ and $\mathrm{Ho}(F) \in \mathrm{Hom}_{hCAT}(I,J)$.

Let $hCAT$ be the category whose objects are moderately small categories, and whose morphism sets are given by $\mathrm{Hom}_{hCAT}(I,J)$.
Given two functors $\mathrm{Ho}(F) : I \rightarrow J$ and $\mathrm{Ho}(G) : J \rightarrow K$ their composition is defined by taking $sk$ of their composition in $CAT$:
$$\mathrm{Ho}(F) \underset{hCAT}{\circ} \mathrm{Ho}(G) := \mathrm{Ho}( \mathrm{Ho}(F) \underset{CAT}{\circ} \mathrm{Ho}(G)  )$$
With this composition $hCAT$ is a category.

Then $hCAT$ is in fact equivalent to the homotopy category of $CAT$, if we put on $CAT$ the canonical model structure constructed in \cite{rezk1996model}, in which weak equivalences are categorical equivalences and fibrations are Joyal isofibrations.
We just prefer to present the morphisms of $hCAT$ as particular chosen functors instead of presenting them as isomorphism classes of functors.

The category $hCAT$ has finite limits. In general pullbacks in $hCAT$ do not need to coincide with pullbacks in $CAT$. However if we take a pullback of a Grothendieck fibration or opfibration in $CAT$, then it will also be a pullback in $hCAT$, because Grothendieck fibrations and opfibrations are Joyal isofibrations, and in any right proper model category a strict pullback of a fibration is a homotopy pullback.

Let $\mathscr{C} := hCAT$ be the homotopy category of moderately small categories.
Let $A:= Cat$ be the $1$-category of small categories.

Consider the inclusion functor $\iota: A \rightarrow CAT$, $\iota(I) := I$. This functor corresponds to a Grothendieck op-fibration $p^{\heartsuit} : F \rightarrow A$. We define $p := \mathrm{Ho}(p^{\heartsuit})$. So we now have a morphism $p : F \rightarrow A$ in $\mathscr{C}$.
The fiber of $p$ over some $I \in A$ is isomorphic to $I \in CAT$. Define $1_F^{\heartsuit} : 1 \rightarrow F$ as the map that picks out the terminal category $1 \in A$ and the unique object in that category $* \in 1$. Define the unit $1_F$ by $1_F := \mathrm{Ho}(1_F^{\heartsuit})$.

Since pullbacks of Grothendieck op-fibrations are Grothendieck op-fibrations, we have for any functor $x : U \rightarrow A$ in $hCAT$ an isomorphism
$$\llbracket x \rrbracket \cong \oplaxcolim{u \in U} x(u)  $$

For every $x : U \rightarrow A$ we define the dependent sum function
$$\oversum{x} : \mathrm{Hom}_{\mathscr{C}}(\llbracket x \rrbracket, A) \rightarrow \mathrm{Hom}_{\mathscr{C}}(U, A) $$
on a functor $f: \llbracket x \rrbracket\rightarrow A$ as follows: 

We define a functor $\oversum{x}(f)^\heartsuit: U \rightarrow A$ that is defined on objects by  $$\oversum{x}(f)^\heartsuit(u) := \oplaxcolim{i \in x(u)} f(u,i)$$
Given an arrow $\alpha: u \rightarrow v$ in $U$ we need to
define an arrow $\oversum{x}(f)^\heartsuit(\alpha): \oversum{x}(f)^\heartsuit(u) \rightarrow \oversum{x}(f)^\heartsuit(v)$ in $A$, by using the universal property of the oplax colimit in the codomain.
For all $i \in x(u)$ we then have an arrow $(\alpha, id_{x(\alpha)(i)} ) : (u, i) \rightarrow (v, x(\alpha)(i))$ in $\oplaxcolim{u \in U} x(u) \cong \llbracket x \rrbracket$. We then get a composite functor $ f(u,i) \overset{f(\alpha, id_{x(\alpha)(i)})}{\rightarrow} f(v, x(\alpha)(i)) \rightarrow \oplaxcolim{k \in x(v)} f(v,k)  $.
For every morphism $\beta : i \rightarrow j$ in $x(u)$ we have a diagram
$$\xymatrix{  f(u,i) \ar[rr]^(.45){f(\alpha, id_{x(\alpha)(i)} )}  \ar[d]_{f(id_u,\beta)} & & f(v, x(\alpha)(i))  \ar[d]_{f(id_v,x(\alpha)(\beta)  )}  \ar[dr]_(.45){}="1" \\
	          f(u,j)  \ar[rr]_(.45){f(\alpha, id_{x(\alpha)(j)})} & & f(v, x(\alpha)(j)) \ar[r]^(.45){}="2" & \oplaxcolim{k \in x(v)} f(v,k)  \ar@{=>}|-{}"1" ;"2"   }$$
where the left square commutes, and in the right triangle there is a canonical natural transformation.

By the universal property of the oplax colimit $\oversum{x}(f)^\heartsuit(u) = \oplaxcolim{i \in x(u)} f(u,i)$ we get a morphism $ \oversum{x}(f)^\heartsuit(\alpha) :  \oversum{x}(f)^\heartsuit(u) \rightarrow \oversum{x}(f)^\heartsuit(v)$.
Then $\oversum{x}(f)^\heartsuit : U \rightarrow A$ is a functor.
We define $\oversum{x}(f) := \mathrm{Ho}(\oversum{x}(f)^\heartsuit)$.

Next we need to define the flattening map
$$f^\flat : \llbracket f \rrbracket \rightarrow \llbracket \oversum{x}(f) \rrbracket$$

We have canonical isomorphisms
$$\llbracket f \rrbracket \cong \oplaxcolim{(u,i) \in \llbracket x \rrbracket} f(u,i) \cong \oplaxcolim{(u,i) \in \oplaxcolim{u \in U} x(u)} f(u,i) $$
$$ \llbracket \oversum{x} f \rrbracket \cong \oplaxcolim{u \in U}  \oplaxcolim{i \in x(u)} f(u,i)  $$
and using the universal property of oplax colimits one can construct a natural isomorphism of categories
$$\oplaxcolim{(u,i) \in \oplaxcolim{u \in U} x(u)} f(u,i) \overset{\sim}{\rightarrow} \oplaxcolim{u \in U}  \oplaxcolim{i \in x(u)} f(u,i) $$

So there is a canonical isomorphism
$f^{\flat,\heartsuit} : \llbracket f \rrbracket \rightarrow \llbracket \oversum{x} f \rrbracket $
and we define $f^\flat := \mathrm{Ho}( f^{\flat,\heartsuit} )$.

We now verify that $A$ satisfies all the axioms of a dependent adder.

For any category $I$ we have a natural isomorphism
$$\oplaxcolim{i \in I} 1 \cong I$$
and this implies that for any map $x : U \rightarrow Cat$ in $hCAT$ we have a strict equality $\oversum{x} const_{1_A} = x$ of morphisms in $hCAT$, so it satisfies the Right Unit Axiom.

The Left Unit Axiom follows from the fact that if $I$ is a small category and $F_I : 1 \rightarrow Cat$ is the functor sending the unique object $*$ from the terminal category $1$ to $I \in Cat$, then $I$ is the oplax colimit of $F_I$.

$$\oplaxcolim{i \in 1} F_I(i) \cong F_I(*) = I $$

The Sum Associativity Axiom follows from the fact that 
for any category $I$, functor $F: I \rightarrow Cat$ and $G : \oplaxcolim{i \in I} F(i) \rightarrow Cat$ there is a natural isomoprhism
$$\oplaxcolim{(i,j) \in \oplaxcolim{i \in I} F(i)  } G(i,j) \rightarrow \oplaxcolim{i \in I} \oplaxcolim{j \in F(i)} G(i,j)$$

For the Flatten Associativity Axiom one needs to verify
for every $x : U \rightarrow Cat$, $f : \oplaxcolim{u \in U} x(u) \rightarrow Cat$ and $g : \oplaxcolim{u \in U} \oplaxcolim{i \in x(u)} f(u,i) \rightarrow Cat$
that the following diagram commutes up to natural isomorphism
\footnotesize
$$\xymatrix{ \oplaxcolim{(u,i,j) \in \oplaxcolim{(u,i) \in \oplaxcolim{u \in U} x(u)  } f(u,i)   }  g(u,i,j)  \ar[d]  \ar[r]  &  \oplaxcolim{(u,i) \in \oplaxcolim{u \in U} x(u)  }  \oplaxcolim{j \in f(u,i)} g(u,i,j) \ar[d]    \\
	 \oplaxcolim{ (u,i,j) \in \oplaxcolim{ u \in U  } \oplaxcolim{i \in x(u)} f(u,i)     } g(u,i,j)  \ar[r] &   \oplaxcolim{u \in U}  \oplaxcolim{i \in x(u)} \oplaxcolim{j \in f(u,i) } g(u,i,j)   } $$
\normalsize

where all maps are constructed canonically using the universal properties of the oplax colimits.
If this diagram commutes up to isomorphism in $CAT$ then the corresponding diagram in $hCAT$ commutes strictly, and then $Cat$ satisfies the Flatten Associativity Axiom.

$Cat$ also satisfies the Fubini axiom because oplax colimits commute, so it is a commutative dependent adder.

\subsubsection{Lax colimits as sums}
One can also make the category of small categories $Cat$ into a dependent adder with lax colimits instead of oplax colimits.

For a functor $F : I \rightarrow CAT$ we can define the lax limit $\laxcolim{i \in I} F(i)$ of $F$ as the opposite of the oplax colimit of the composite $I \overset{F}{\rightarrow} CAT \overset{op}{\rightarrow} CAT$.
$$ \laxcolim{i \in I} F(i) := (\oplaxcolim{i \in I} F(i)^{op} )^{op}$$
This is also known as the Grothendieck construction $\int F$.

We have a functor $\iota : Cat \rightarrow CAT$ defined by $\iota(I) := I^{op}$. It corresponds to a Grothendieck op-fibration $p : F \rightarrow Cat$, whose fiber over some $I \in Cat$ is isomorphic to $I^{op}$.

Given a functor $F: I^{op} \rightarrow Cat$ we define the sum of $F$ to be its lax colimit / Grothendieck construction.
$$\oversum{I} F := \laxcolim{i \in I^{op}} F(i) = \int F $$

We then have natural isomorphisms
$$\laxcolim{i \in I^{op}} 1 \cong I $$
$$ \laxcolim{ (i,j) \in (\laxcolim{i \in I^{op}} F(i))^{op}  } G(i,j) \cong \laxcolim{i \in I^{op}}  \laxcolim{j \in F(i)^{op}} G(i,j)   $$
which make $Cat$ into a dependent adder in $hCAT$.

\subsection{Commutative $\mathbb{Q}$-algebras}

Let $R$ be a commutative $\mathbb{Q}$-algebra. Let $\mathscr{C} := Sch/R$ be the category of schemes over $\mathrm{Spec}(R)$. Let $A := \mathbb{A}^1_R$ be the affine line.
We define $F:= \mathbb{A}^2_R$ and let $p : F \rightarrow \mathbb{A}^1_R$ be the projection to the second variable.
For every $x : U \rightarrow \mathbb{A}^1_R$ we have $\llbracket x \rrbracket \cong \mathbb{A}^1_U$. Let $S := \Gamma(U, \mathscr{O}_U)$ be the ring of global sections of $U$.
Then morphisms $x : U \rightarrow \mathbb{A}^1_R$ correspond to elements $x \in S$, and morphisms $p : \mathbb{A}^1_U \rightarrow \mathbb{A}^1_R$ corresponds to elements $p \in S[X]$.
So for every $x \in S$ and every $p \in S[X]$ we need to define a sum $\oversum{x}p \in S$. For this we will use Faulhaber's formula \cite{orosi2018simple}.
For any $d \in \mathbb{N}$ define the $d$-th Faulhaber polynomial $F_d \in S[X]$ by
$$F_d := \frac{1}{d+1} \depsum{n=0}{d}\binom{d+1}{n}B_nX^{d-n+1} $$
where $\binom{d+1}{n}$ is the binomial coefficient and $B_n$ is the Bernoulli number with the convention $B_1= +\frac{1}{2}$.
Faulhaber's formula states that for any $d, n \in \mathbb{N}$
$$\depsum{k=1}{n} k^d = F_d(n)$$

For any polynomial $p \in S[X]$ we can write $p$ as a sum of monomials $p = \depsum{i=0}{n} p_i\cdot X^i$ and then define $\sum(p) \in S[X]$ by
$$\sum (p) := \depsum{i=0}{n} p_i \cdot F_i$$
Then $\sum(p)$ is a polynomial satisfying $\sum(p)(m) = \depsum{i=1}{m} p(i)$ for all $m \in \mathbb{N}$.

We define for all $x \in S$ and $p \in S[X]$ that $\oversum{x}(p) := \sum(p)(x)$.

With this we have defined the dependent sum function of the dependent adder $\mathbb{A}^1_R$.

Next, for every $p \in S[X]$ we need to define a flattening function $p^\flat : \mathbb{A}^2_U \rightarrow \mathbb{A}^1_U$ over $U$. Such a morphism corresponds to an element in $S[X,Y]$.
We define $p^\flat \in S[X,Y]$ by $p^\flat := Y + \oversum{X-1}(p)$.

We now need to verify the axioms of a dependent adder.
The Left Unit Axiom is true because $\oversum{x}1 = F_0(x) = x$.
The Right Unit Axiom is true because $\oversum{const_1}(p\cdot const_1^\flat) = p(1 + \oversum{X-1} 1) = p(X) = p$.

For the Sum Associativity Axiom we need to show for all $R$-algebras $S$ and all $x \in S$, $f, g \in S[X]$, that
$\oversum{\oversum{x}(f)}(g) = \oversum{x}(\oversum{f} g \circ f^\flat)$ in $S$.
To show it we will use the following lemma:
\begin{lem}\label{lemmaPolyEqual}
	Let $B$ be a commutative $\mathbb{Q}$-algebra.
	If for two polynomials $p, q \in B[X]$ we have $p(n) = q(n)$ for all $n \in \mathbb{N}$, then $p = q$ in $B[X]$.
\end{lem}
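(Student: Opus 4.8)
The plan is to set $r := p - q \in B[X]$; the claim is equivalent to showing that if $r(n) = 0$ for all $n \in \mathbb{N}$ then $r = 0$. The tool I would use is the finite difference operator $\Delta : B[X] \to B[X]$, $(\Delta r)(X) := r(X+1) - r(X)$, together with the fact that, because $B$ is a $\mathbb{Q}$-algebra, the binomial polynomials $\binom{X}{k} := \tfrac{1}{k!}\, X(X-1)\cdots(X-k+1) \in B[X]$ form a $B$-module basis of $B[X]$. (This is exactly where the $\mathbb{Q}$-algebra hypothesis enters: one needs each $k!$ to be invertible in $B$.) In this basis, if $r = \sum_{k=0}^{d} c_k \binom{X}{k}$ with $c_k \in B$, then $c_k = (\Delta^k r)(0)$, since $\Delta \binom{X}{k} = \binom{X}{k-1}$ and $\binom{0}{k} = \delta_{0,k}$.

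Next I would observe that vanishing of $r$ on $\mathbb{N}$ propagates to all its finite differences: for every $k, m \in \mathbb{N}$,
$$(\Delta^k r)(m) = \sum_{j=0}^{k} (-1)^{k-j} \binom{k}{j}\, r(m+j) = 0,$$
because this is a $\mathbb{Z}$-linear combination of values of $r$ at natural numbers, all of which are $0$. Taking $m = 0$ (or, if one prefers $\mathbb{N}$ to start at $1$, shifting and using the basis $\binom{X - m_0}{k}$ where $m_0 = \min \mathbb{N}$) gives $c_k = (\Delta^k r)(0) = 0$ for every $k$, hence $r = \sum_k c_k \binom{X}{k} = 0$, i.e.\ $p = q$ in $B[X]$. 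An equivalent packaging, which I might mention in passing, is that the Vandermonde matrix $\big(i^{\,j}\big)_{0 \le i, j \le d}$ has determinant $\prod_{i < j}(j - i)$, a nonzero integer and hence invertible in the $\mathbb{Q}$-algebra $B$, so the linear system expressing $r(0), \dots, r(d)$ in terms of the coefficients of $r$ forces those coefficients to vanish.

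There is no substantial obstacle here; the only thing worth being careful about is precisely the point flagged above, namely that over a general commutative ring $B$ one cannot invoke "a polynomial of degree $d$ has at most $d$ roots," since $B$ may have zero divisors or non-cancellable integers. The $\mathbb{Q}$-algebra hypothesis is used solely to make the factorials $k!$ (equivalently, the Vandermonde determinant) invertible, after which the argument is immediate.
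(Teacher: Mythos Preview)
Your argument is correct, but it differs from the paper's. The paper also reduces to $f := p - q$ and proceeds by induction on $\deg f$: from $f(0)=0$ one gets $f_0=0$, hence $f = X\cdot g$; for $n \geq 1$ the invertibility of $n$ in the $\mathbb{Q}$-algebra $B$ gives $g(n)=0$, so $g(X+1)$ vanishes on $\mathbb{N}$ and has strictly smaller degree, and the inductive hypothesis finishes. Your route is instead linear-algebraic: you write $r$ in the binomial basis $\binom{X}{k}$ and read off the coefficients as iterated finite differences at $0$, which are $\mathbb{Z}$-linear combinations of the assumed-zero values $r(n)$; the Vandermonde variant you mention is an equivalent packaging. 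Both proofs invoke the $\mathbb{Q}$-algebra hypothesis in the same spirit (the paper to invert each $n$, you to invert each $k!$, or equivalently the Vandermonde determinant $\prod_{i<j}(j-i)$), and both correctly flag that the usual ``at most $d$ roots'' argument is unavailable over a ring with possible zero divisors. Your approach is a bit more direct in that it avoids induction and makes the role of the hypothesis transparent in one stroke; the paper's is slightly more hands-on but needs no auxiliary basis.
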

\begin{proof}
	We claim that for all $f \in B[X]$, if $f(n) = 0$ for all $n \in \mathbb{N}$, then $f = 0$ in $B[X]$. Once we have shown this claim the lemma follows with $f := p-q$.
	We show this claim by induction over the degree $d$ of $f$.
	
	Suppose that for all polynomials $g$ of degree $d$, if $g(n) = 0$ for all $n \in \mathbb{N}$, then $g = 0$.
	
	Let $f \in B[X]$ be a polynomial of degree $d+1$ such that $f(n) = 0$ for all $n \in \mathbb{N}$.
	Write $f = \depsum{i=0}{d+1} f_iX^i$.
	Then $0 = f(0) = f_0$. So $f = X \cdot (\depsum{i=1}{d+1} f_iX^{i-1})$.
	Let $g := \depsum{i=1}{d+1} f_iX^{i-1}$. For all $n \in \mathbb{N}$ with $n \neq 0$ we have $g(n) = \frac{f(n)}{n} = 0$. So $g(X+1) = \depsum{i=1}{d+1} f_i(X+1)^{i-1}$ is a polynomial of degree $d$ that vanishes on all of $\mathbb{N}$. By inductive hypothesis $g(X+1) = 0$ in $B[X]$. This then implies $g = 0$ and then $f = 0$.
\end{proof}

For $p \in S[T]$ and $n \in \mathbb{N}$ we know that $\oversum{n+1}(p) = p(n+1) + \oversum{n}(p)$.
With Lemma \ref{lemmaPolyEqual} it follows that for all $x \in S$ we have $\oversum{x+1}(p) =  p(x+1)+\oversum{x}(p) $.
We can then show inductively for all $n \in \mathbb{N}$, $x \in S$ and $p \in S[T]$ that $\oversum{x+n}(p) = \oversum{x}(p) + \oversum{n}(p( T + x) )$.
By Lemma \ref{lemmaPolyEqual} it follows that for all $x, y \in S$ and $p \in S[T]$ that $\oversum{x+y}(p) = \oversum{x}(p) + \oversum{y}(p(T + x))$.
We can then inductively show for all $n \in \mathbb{N}$, and $f, g \in S[T]$ that
$$\oversum{\oversum{n}(f)}(g) = \depsum{j=1}{\depsum{i=1}{n}f(i)}g(j) = \depsum{i=1}{n} \depsum{j=1}{f(i)} g(j+\depsum{k=1}{i-1}f(k)) = \oversum{n}(\oversum{f}(g \circ f^\flat))$$
From Lemma \ref{lemmaPolyEqual} we then get the Sum Associativity Axiom for all $x \in S$ and all $f, g \in S[T]$.

As soon as one has the Sum Associativity Axiom, one can prove the Flatten Associativity Axiom exactly like we did for the natural numbers in Section \ref{naturals}.

We also claim that $\mathbb{A}^1_R$ satisfies the Fubini axiom. This can be shown as follows.
Firstly we have for all $n \in \mathbb{N}$ and $p, q \in S[T]$ that $\oversum{n}(p+q) = \oversum{n}(p) + \oversum{n}(q)$. So Lemma \ref{lemmaPolyEqual} implies for all $p , q \in S[T]$ and $x \in S$ that $\oversum{x}(p+q) = \oversum{x}(p) + \oversum{x}(q)$.
One can then show inductively that for all $n \in \mathbb{N}$, $x \in S$ and $p \in S[T_1][T_2]$ that $\oversum{x}(\oversum{n} (p) ) = \oversum{n}(\oversum{x}(switch(p) )$ where $switch: S[T_1][T_2] \rightarrow S[T_1][T_2]$ is isomorphism exchanging $T_1$ and $T_2$, and then the Fubini axiom follows from Lemma \ref{lemmaPolyEqual}.

\subsection{All the other examples}
\begin{enumerate}
	\item The set of small ordinals $Ord$ is a dependent adder in the category of moderately small sets $SET$.
	We take the function $p : F \rightarrow Ord$ whose fiber $p^{-1}(\{\alpha\})$ at some $\alpha \in Ord$ is the underlying set of $\alpha$.
	For every $\alpha$ we define the $\alpha$-dependent sum function $$\oversum{\alpha}: \mathrm{Hom}_{SET}(\alpha, Ord) \rightarrow Ord$$
	by transfinite induction over $\alpha$.
	$\oversum{0}$ is defined by $\oversum{0} f := 0$.
	$\oversum{\alpha+1}$ is defined by $\oversum{\alpha+1}f := (\oversum{\alpha} (f|_\alpha) ) + f(\alpha)$. Here $f|_\alpha$ means the restriction of $f$ to $\alpha \subseteq \alpha +1$. Also, it is important here to put the $f(\alpha)$ on the right of the sum, and not on the left.
	If $\alpha = \underset{\beta < \alpha }{\cup} \beta$ is a limit ordinal, then we define $\oversum{\alpha} f := \underset{\beta < \alpha }{\mathrm{sup}} \oversum{\beta} f$.
	These $\alpha$-dependent sum functions then assemble together for every $x : U \rightarrow Ord$ into an $x$-dependent sum function
	$$\oversum{x} : \mathrm{Hom}_{SET}(\llbracket x \rrbracket, Ord) \rightarrow \mathrm{Hom}_{SET}(U, Ord) $$
	
	Note that the indexing conventions here are slightly shifted compared to those from Section \ref{naturals}, because here $\oversum{\alpha}f$ refers to the sum of $f(i)$ for all $0 \leq i < \alpha$, while in Section \ref{naturals} the sum $\oversum{n} f$ referred to the sum of $f(i)$ for all $1 \leq i \leq n$.
	
	For $f : \llbracket x \rrbracket \rightarrow Ord$ the flattening function $f^\flat : \underset{u \in U}{\coprod} \underset{\alpha \in x(u)}{\coprod} f(\alpha) \rightarrow \underset{u \in U}{\coprod} \oversum{x(u)}(f) $ is defined by $f^\flat(u,i,j) := (u,  (\oversum{i} (f)) + j)$. It is important that $j$ is here on the right side of the sum.
	
	One can then easily verify all the axioms of dependent adders by using a lot of transfinite induction.
	
	\item The \enquote{set of all small cardinals} $Card$ is a dependent adder in the category of moderately small sets $SET$. It works exactly like the ordinal numbers $Ord$.
	\item The \enquote{field with one element} $\mathbb{F}_1 = \{0,1\}$ works exactly like the natural numbers $\mathbb{N}$. See Section \ref{naturals}.
	\item The unit interval $[0,1]$ works exactly like the real numbers $\bbR$. See Section \ref{reals}.
	\item The integers $\mathbb{Z}$ are a dependent adder in $Set$.
		We define $p : F \rightarrow \mathbb{Z}$ to be the function whose fibers are $\mathbb{Z}$ everywhere. So $p$ is the second product projection $p : \mathbb{Z} \times \mathbb{Z} \rightarrow \mathbb{Z}$.
		Given some $x : U \rightarrow \mathbb{Z}$ and a function $f : U \times \mathbb{Z} \rightarrow \mathbb{Z}$ we define its sum by
		$$\oversum{x}(f)(u) := \begin{cases}
			\depsum{i=1}{x(u)} f(u,i) &, x(u) \geq 0\\
			\depsum{i=1}{-x(u)}-f(u,1-i) &, x(u) < 0
		\end{cases}$$
	The flattening function $f^\flat : U \times \mathbb{Z} \times \mathbb{Z} \rightarrow U \times \mathbb{Z}$ is given by $f^\flat(u, i, j) := (u, j + \depsum{k=1}{i-1}f(u,k))$.
	This makes $\mathbb{Z}$ into a dependent adder.
	
	Just as a remark: The reason we don't take for $p$ the function whose fiber over $n$ is $\{1,\dots, |n|\}$, is because then the usual flattening map $f^\flat : \underset{u \in U}{\coprod} \underset{i \in \{1,\dots, |x(u)|\}}{\coprod} \{1,.\dots,|f(i)| \} \rightarrow \underset{u \in U}{\coprod} \{1,\dots, \depsum{i=1}{x(u)}f(u,i)\}$ would be impossible to define, because if $f$ takes on some negative values, then $j + \depsum{k=1}{i-1}f(u,k)$ might not lie in $\{1,\dots, \depsum{i=1}{x(u)}f(u,i)\}$. If a dependent adder contains negative numbers, one has to add some redundant information in the fibers of $p$.
	\item The reals $\mathbb{R}$ work exactly like the non-negative reals $\bbR$ from Section \ref{reals}, except like for the integers $\mathbb{Z}$ we let $p: F \rightarrow \mathbb{R}$ be the second product projection $\mathbb{R} \times \mathbb{R} \rightarrow \mathbb{R}$.
	\item The \enquote{complete local ring at infinity} $[-1,1]$ works exactly like $\mathbb{R}$ above.
	\item For the complex numbers $\mathbb{C}$ we choose as background category $\mathscr{C}$ the category of complex analytic spaces from \cite{fischer2006complex}. This category has pullbacks by \cite[Corollary 0.32]{fischer2006complex}. In this category a morphism $\mathbb{C} \rightarrow \mathbb{C}$ is an entire holomorphic function $\mathbb{C} \rightarrow \mathbb{C}$.
	Making $\mathbb{C}$ into a dependent adder in this category works very similar to the real numbers $\mathbb{R}$.
\end{enumerate}

Finally a class of degenerate examples not mentioned in the introduction:
If $\mathscr{C}$ is a category with pullbacks and $M$ is a monoid in $\mathscr{C}$, then $A:= M$ can be made into a dependent adder in $\mathscr{C}$. We define $F := M$, $p := id_M : F \rightarrow M$. Then for every $x : U \rightarrow M$ we have $\llbracket x \rrbracket \cong U$.We define a natural function $\mathrm{Hom}_{\mathscr{C}}(U,M) \rightarrow \mathrm{Hom}_{\mathscr{C}}(U,M)$ by sending $y : U \rightarrow M$ to the multiplication $y \cdot x: U \rightarrow M$.
 For every $f : \llbracket x \rrbracket  = U \rightarrow M$ we define
 $f^\flat := id_U : \llbracket f \rrbracket = U \rightarrow U = \llbracket \oversum{x} f \rrbracket$. With this $M$ becomes a dependent adder.

\section{Right Modules over Dependent Adders}\label{sectionRightModules}
Let $\mathscr{C}$ be a category with pullbacks and $A$ a dependent adder in $\mathscr{C}$.

\begin{dfn}
	A \textit{dependent right $A$-module} consists of
\begin{enumerate}
	\item An object $M$ in $\mathscr{C}$
	\item For every $x : U \rightarrow A$ a dependent sum function
	$$ \oversum{x} : \mathrm{Hom}_{\mathscr{C}}( \llbracket x \rrbracket, M) \rightarrow \mathrm{Hom}_{\mathscr{C}}(U,M)$$
	natural in $x \in \mathscr{C}/A$.\\
	For $x : U \rightarrow A$, $f : \llbracket x \rrbracket \rightarrow A$ and $g : \llbracket \oversum{x} f \rrbracket \rightarrow M$ we define $f \boxtimes g := \oversum{f} g \circ f^\flat$.
\end{enumerate}
satisfying the following axioms
\begin{enumerate}
	\item Unit axiom: For $x : U \rightarrow A$ and $f : \llbracket x \rrbracket \rightarrow M$ 
	$$\oversum{const_1} f \circ const_1^\flat = f$$
	\item Sum associativity axiom: For $x : U \rightarrow A$, $f : \llbracket x \rrbracket \rightarrow A$ and $g : \llbracket \oversum{x} f \rrbracket \rightarrow M$
	$$\oversum{\oversum{x} f} g = \oversum{x} \oversum{f} g \circ f^\flat$$
\end{enumerate}
\end{dfn}

While we are at the present moment still hesitant to provide a definition of morphisms of dependent adders, we do not hesitate to define morphisms of dependent right modules.
\begin{dfn}
	Given two dependent right $A$-modules $M$ and $N$, an \textit{$A$-linear map} $M \rightarrow N$ consists of a morphism $\phi : M \rightarrow N$ in $\mathscr{C}$, such that for every $x : U \rightarrow A$ in $\mathscr{C}$ and $f : \llbracket x \rrbracket \rightarrow M$ we have $$\phi \circ (\oversum{x}(f)) =\oversum{x}(\phi \circ f)$$
\end{dfn}

We now provide a few examples of right modules over dependent adders, and the linear maps between them.

\subsection{$\mathbb{F}_1$-modules and pointed sets}
An $\mathbb{F}_1$-dependent right module $M$ is a pointed set. The basepoint is given by the $0$-dependent sum of the unique function $\emptyset \rightarrow M$.

An $\mathbb{F}_1$-linear map is a basepoint-preserving function.

\subsection{$\mathbb{N}$-modules and monoids}

\begin{thm}
	A dependent right $\mathbb{N}$-module $M$ is the same thing as a monoid.
\end{thm}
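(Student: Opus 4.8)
The plan is to produce mutually inverse constructions between dependent right $\mathbb{N}$-module structures on a set $M$ and monoid structures on $M$, working in $\mathscr{C} = Set$ and exploiting the explicit description of $\mathbb{N}$ as a dependent adder from Section~\ref{naturals}, where $\llbracket x \rrbracket = \underset{u \in U}{\coprod}[x(u)]$ and $f^\flat(u,i,j) = (u, j + \depsum{k=1}{i-1}f(u,k))$. A preliminary observation used throughout: since maps in $Set$ are determined on points, naturality of $\oversum{}$ along $* \xrightarrow{u} U \xrightarrow{x} \mathbb{N}$ forces $(\oversum{x}f)(u) = \oversum{x(u)}(i\mapsto f(u,i))$, so a dependent right module structure is completely determined by the functions $\oversum{n} : \mathrm{Hom}([n],M) \to M$, $n \in \mathbb{N}$.

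\emph{From a module to a monoid.} Given a dependent right $\mathbb{N}$-module $M$, let $e \in M$ be the value of $\oversum{0}$ on the unique map $\llbracket 0 \rrbracket = \emptyset \to M$, and set $m \cdot n := \oversum{2}(h_{m,n})$, where $h_{m,n} : [2]\to M$ sends $1\mapsto m$, $2\mapsto n$. Unwinding the Unit axiom at $U=*$, $x=1$ gives $\oversum{1}(g) = g(1)$. Applying the Sum Associativity Axiom with $x = 2\in\mathbb{N}$ and $f:[2]\to\mathbb{N}$ the family $(a,b)$, $a+b=n$, and computing $f^\flat$ together with the point-restriction property above, the right-hand side $\oversum{x}\oversum{f}g\circ f^\flat$ at the point reduces to $\oversum{2}$ of the family with entries $\oversum{a}(g|_{\{1,\dots,a\}})$ and $\oversum{b}(g|_{\{a+1,\dots,a+b\}})$, i.e.\ to their product. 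This is the splitting identity
$$\oversum{a+b}(g) = \oversum{a}(g|_{\{1,\dots,a\}}) \cdot \oversum{b}(g|_{\{a+1,\dots,a+b\}}), \qquad g:[a+b]\to M.$$
Taking $(a,b) = (1,0),(0,1)$ yields the unit laws $g(1)\cdot e = g(1) = e\cdot g(1)$; taking $n=3$ with $(a,b)=(2,1),(1,2)$ yields $(g(1)\cdot g(2))\cdot g(3) = g(1)\cdot(g(2)\cdot g(3))$. Hence $(M,\cdot,e)$ is a monoid, and iterating the identity gives $\oversum{n}(g) = g(1)\cdot g(2)\cdots g(n)$ for all $n$ and $g:[n]\to M$.

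\emph{From a monoid to a module.} Given a monoid $(M,\cdot,e)$, for $x:U\to\mathbb{N}$ and $f:\llbracket x\rrbracket\to M$ set $(\oversum{x}f)(u) := f(u,1)\cdot f(u,2)\cdots f(u,x(u))$ (empty product $= e$), with flattening maps as already defined for $\mathbb{N}$ in Section~\ref{naturals}. Naturality in $x$ is immediate, and the Unit axiom holds since $const_1$ contributes only length-one products. For the Sum Associativity Axiom, both sides unwind exactly as in the verification for $\mathbb{N}$ in Section~\ref{naturals}, with $+$ replaced by the product of $M$: each is the product of the factors $g(u,k)$, $k = 1,\dots,\oversum{x(u)}f$, in their natural order, the two sides differing only by reindexing along the order-preserving bijection $f^\flat$, so they agree by associativity of $\cdot$ — commutativity is never used.

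\emph{Mutual inverseness and morphisms.} Composing monoid $\to$ module $\to$ monoid returns $e$ (the empty product) and $\oversum{2}(h_{m,n}) = m\cdot n$, i.e.\ the original monoid. Composing module $\to$ monoid $\to$ module returns, at each point $u$, the value $f(u,1)\cdots f(u,x(u))$, which by the identity from the first part equals $\oversum{x(u)}(i\mapsto f(u,i)) = (\oversum{x}f)(u)$ — the original value. Finally, spelling out the definition of $A$-linear map shows $\mathbb{N}$-linearity of $\phi:M\to N$ is precisely the conjunction of $\phi(m\cdot n)=\phi(m)\cdot\phi(n)$ and $\phi(e)=e$, so $\mathbb{N}$-linear maps are exactly monoid homomorphisms. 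The one genuinely substantive step is extracting the splitting identity from the Sum Associativity Axiom and the resulting identification $\oversum{n}(g)=g(1)\cdots g(n)$; once the binary operation and the abstract sums are matched, the remaining verifications are the bookkeeping already carried out in Section~\ref{naturals}.
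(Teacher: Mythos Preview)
Your proof is correct and takes essentially the same approach as the paper: both extract the monoid operation as $\oversum{2}$ and derive the monoid axioms from the Sum Associativity Axiom applied with $x=2$ and $f=(a,b)$. Your packaging of this step as the single splitting identity $\oversum{a+b}(g)=\oversum{a}(g|_{[a]})\cdot\oversum{b}(g(\cdot+a))$, together with your explicit check of mutual inverseness and the inclusion of the $\mathbb{N}$-linear-map statement, is tidier than the paper's ad hoc list-notation computations but not a different route.
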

\begin{proof}
	
	Given any monoid $M$ we obtain an $\mathbb{N}$-dependent module in the following way: 
	We write the binary operation of $M$ by $+$.
	For any $n \in \mathbb{N}$ and function $f  : \{1,\dots, n\} \rightarrow M$ we write $\depsum{i=1}{n} f(i)$ for the sum $f(1) + f(2) + \dots + f(n)$.
	For every $x : U \rightarrow \mathbb{N}$ in $Set$ we define
	$\oversum{x} : \mathrm{Hom}_{Set}(\llbracket x \rrbracket , M) \rightarrow \mathrm{Hom}_{Set}(U, M)$ by
	$$\oversum{x} (f) (u) := \depsum{i=1}{x(u)} f(u,i)$$
	With these dependent sums $M$ is a dependent right $\mathbb{N}$-module.
	
	Conversely, if $M$ is a dependent right $\mathbb{N}$-module, then we can make $M$ into a monoid. 
	In the following we will use list notation for functions $f: \{1,\dots, n\} \rightarrow S$ going from $\{1,\dots, n\}$ into some set $S$.
	Given $n \in \mathbb{N}$ and $n$ elements $s_1, \dots, s_n \in S$ we write $\langle s_1, \dots, s_n \rangle$ for the function $\{1,\dots, n\} \rightarrow S$ sending $i$ to $s_i$
	
	So for example $\langle0\rangle$ is the function $0 : \{1\} \rightarrow \mathbb{N}$ sending $1$ to $0$. We can form the set $\llbracket \langle 0 \rangle \rrbracket = \emptyset$, and have a dependent sum function $\oversum{\langle 0 \rangle } : \mathrm{Hom}_{Set}(\emptyset, M) \rightarrow \mathrm{Hom}_{Set}(\{1\},M)$.
	We have a unique function $\langle \rangle : \emptyset \rightarrow M$.
	and define the unit $0_M \in M$ by defining
	$$\langle 0_M \rangle := \oversum{\langle 0 \rangle } \langle \rangle $$
	This a priori just defines a function $\langle 0_M \rangle: \{1\} \rightarrow M$, but by $0_M$ we of course just mean the unique element of $M$ in the range of that function.
	
	For two elements $a, b \in M$ we define $a + b \in M$ by
	$$\langle a + b \rangle := \oversum{\langle 2 \rangle} \langle a, b \rangle $$
	We now need to show that this satisfies the unit axioms and associativity axiom of a monoid.
	Take some element $a \in M$ and show $0_M + a = a$.
	In $\mathbb{N}$ we have $\langle 1 \rangle = \langle 0 + 1 \rangle = \oversum{\langle2 \rangle} \langle 0,1 \rangle$.
	So we get
	$$\langle a \rangle = \oversum{\langle 1 \rangle} \langle a \rangle = \oversum{\oversum{\langle 2 \rangle} \langle 0, 1 \rangle  } \langle a \rangle = \oversum{\langle 2 \rangle} \oversum{\langle 0, 1 \rangle  } \langle a \rangle \circ \langle 0, 1 \rangle^\flat $$
	
	Let $\iota_1 : \{1\} \rightarrow \{1,2\}$ be the function sending $1$ to $1$, and let $\iota_2 : \{1\} \rightarrow \{1,2\}$ be the function sending $1$ to $2$.
	The naturality of dependent sums implies that
	$(\oversum{\langle 0, 1 \rangle} \langle a \rangle \circ \langle 0, 1 \rangle^\flat) \circ \iota_1 = \oversum{\langle 0, 1 \rangle \circ \iota_1} \langle a \rangle \circ \langle 0, 1 \rangle^\flat \circ (\iota_1 \underset{A}{\times} id_F)  = \oversum{\langle 0 \rangle} \langle \rangle = \langle 0_M \rangle$
	and 
	$(\oversum{\langle 0, 1 \rangle} \langle a \rangle \circ \langle 0, 1 \rangle^\flat) \circ \iota_2 = \oversum{\langle 0, 1 \rangle \circ \iota_2} \langle a \rangle \circ \langle 0, 1 \rangle^\flat \circ (\iota_2 \underset{A}{\times} id_F)  = \oversum{\langle 1 \rangle} \langle a \rangle = \langle a \rangle$
	so in total we get
	$$\langle a \rangle = \oversum{\langle 2 \rangle} \oversum{\langle 0, 1 \rangle  } \langle a \rangle \circ \langle 0, 1 \rangle^\flat = \oversum{\langle 2 \rangle} \langle 0_M, a \rangle =  \langle 0_M + a \rangle $$
	so $M$ satisfies the left unit axiom. By a similar argument it also satisfies the right unit axiom $a + 0_M = a$.
	For associativity we calculate
	$$\oversum{\langle 3 \rangle} \langle a, b, c \rangle = \oversum{\oversum{\langle 2 \rangle} \langle 2, 1 \rangle  } \langle a, b , c \rangle = \oversum{\langle 2 \rangle} \oversum{\langle 2, 1 \rangle} \langle a, b , c \rangle \circ \langle 2, 1 \rangle^\flat = \langle (a + b) + c \rangle$$
	$$\oversum{\langle 3 \rangle} \langle a, b, c \rangle = \oversum{\oversum{\langle 2 \rangle} \langle 1,2 \rangle  } \langle a, b , c \rangle = \oversum{\langle 2 \rangle} \oversum{\langle 1,2 \rangle} \langle a, b , c \rangle \circ \langle 1,2 \rangle^\flat = \langle a + (b + c) \rangle$$
	so $+$ is associative and $M$ is a monoid.
\end{proof}

\begin{thm}
	A $\mathbb{N}$-linear map between dependent $\mathbb{N}$-modules is the same thing as a monoid homomorphism.
\end{thm}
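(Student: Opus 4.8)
The plan is to leverage the previous theorem, which identifies dependent right $\mathbb{N}$-modules with monoids, and to show that under this identification an $\mathbb{N}$-linear map is exactly a monoid homomorphism. The one structural fact I need beyond the previous theorem is a description of the dependent sum on an arbitrary dependent right $\mathbb{N}$-module $M$ in terms of the recovered monoid structure: for every $x : U \to \mathbb{N}$ and $f : \llbracket x \rrbracket \to M$ I claim $(\oversum{x} f)(u) = f(u,1) + f(u,2) + \cdots + f(u,x(u))$ for all $u \in U$, where $+$ and $0_M$ are as constructed in the previous proof. First I would prove this. Fixing $u \in U$, let $\tilde u : * \to U$ be the point $u$ and $y := x \circ \tilde u : * \to \mathbb{N}$ the element $x(u)$; naturality of the dependent sum along the triangle over $\mathbb{N}$ with legs $\tilde u$ and $x$ reduces the computation to evaluating $\oversum{\langle n\rangle}\langle s_1,\dots,s_n\rangle$ for $n := x(u)$ and $s_i := f(u,i)$. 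Then an induction on $n$, using $\langle n\rangle = \oversum{\langle 2\rangle}\langle n-1,1\rangle$ in $\mathbb{N}$, the Sum Associativity Axiom, and the naturality move that splits off the last summand --- exactly the steps already performed in the previous proof when recovering $+$ --- gives $\oversum{\langle n\rangle}\langle s_1,\dots,s_n\rangle = s_1 + \cdots + s_n$.

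With this lemma the two implications are short. For the forward direction, suppose $\phi : M \to N$ is $\mathbb{N}$-linear. Specializing the identity $\phi \circ \oversum{x}(f) = \oversum{x}(\phi \circ f)$ to $x = \langle 0\rangle$ (the element $0 \in \mathbb{N}$) and $f = \langle\rangle : \emptyset \to M$ gives $\langle \phi(0_M)\rangle = \oversum{\langle 0\rangle}\langle\rangle = \langle 0_N\rangle$, hence $\phi(0_M) = 0_N$; specializing it to $x = \langle 2\rangle$ and $f = \langle a,b\rangle$ gives $\langle \phi(a+b)\rangle = \oversum{\langle 2\rangle}\langle \phi(a),\phi(b)\rangle = \langle \phi(a)+\phi(b)\rangle$, hence $\phi(a+b) = \phi(a)+\phi(b)$. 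So $\phi$ is a monoid homomorphism. Conversely, if $\phi$ is a monoid homomorphism, a trivial induction on $n$ gives $\phi(s_1 + \cdots + s_n) = \phi(s_1) + \cdots + \phi(s_n)$, so by the lemma, for every $x : U \to \mathbb{N}$, $f : \llbracket x\rrbracket \to M$ and $u \in U$ we get $(\phi \circ \oversum{x} f)(u) = \phi\bigl(f(u,1) + \cdots + f(u,x(u))\bigr) = (\phi \circ f)(u,1) + \cdots + (\phi \circ f)(u,x(u)) = \bigl(\oversum{x}(\phi \circ f)\bigr)(u)$, i.e.\ $\phi$ is $\mathbb{N}$-linear.

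Finally I would observe that neither passage changes the underlying function $\phi : M \to N$, so the two assignments are mutually inverse and we obtain the desired bijection. I expect the only real work to be in the lemma identifying $\oversum{x}$ with the iterated monoid sum; everything else is formal. Even that lemma, however, is essentially a repackaging of the naturality-and-associativity computations already carried out in the proof that dependent right $\mathbb{N}$-modules are monoids, so I do not anticipate a genuine obstacle --- the only thing to be careful about, as always in this setup, is keeping the indexing conventions and the distinction between $U$ and $\llbracket x\rrbracket$ straight.
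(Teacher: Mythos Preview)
Your proposal is correct and follows essentially the same approach as the paper: the forward direction is identical (specialize the linearity identity to $x=\langle 0\rangle$ and $x=\langle 2\rangle$), and the converse is the same induction the paper invokes in one line. Your explicit lemma that $(\oversum{x} f)(u)=f(u,1)+\cdots+f(u,x(u))$ in any dependent right $\mathbb{N}$-module is a useful piece of rigor that the paper leaves implicit; it is exactly what is needed to make the converse direction (and, strictly speaking, the bijectivity of the correspondence in the previous theorem) go through.
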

\begin{proof}
	Given a $\mathbb{N}$-linear map $\phi : M \rightarrow N$ we have for every $a, b \in M$ that
	$$\phi(0_M) = \phi(\oversum{\langle 0 \rangle} \langle \rangle) = \oversum{\langle 0 \rangle} \langle \rangle = 0_N  $$
	and
	$$\phi(a + b) = \phi(\oversum{\langle 2 \rangle} \langle a, b \rangle) = \oversum{\langle 2 \rangle} \langle \phi(a), \phi(b) \rangle = \phi(a) + \phi(b) $$
	so $\phi$ is a monoid homomorphism.
	Conversely, if $\phi : M \rightarrow N$ is a monoid homomorphism, then we can show by induction that $\phi(\depsum{i=1}{x} f(i)) = \depsum{i=1}{x} \phi(f(i)) $ and then $\phi$ is a $\mathbb{N}$-linear map.
\end{proof}

\subsection{$\mathbb{R}$-modules and Banach spaces}
Let $\mathbb{R}$ be the dependent adder of all (possibly negative) real numbers.

\begin{lem}
	Giving a dependent right $\mathbb{R}$-module is equivalent to the following data: We have a metrizable space $M$, and for every $x \in \mathbb{R}$ and continuous function $f: \mathbb{R}\rightarrow M$ an element $\underset{0}{\overset{x}{\int}} f(t)dt$ in $M$, such that
	\begin{enumerate}
		\item (Continuity): For every metrizable space $U$, continuous function $x : U \rightarrow \mathbb{R}$ and continuous function $f : U \times \mathbb{R} \rightarrow M$ the function
		$\underset{0}{\overset{x(u)}{\int}} f(u,t) dt$ is continuous in $u$.
		\item (Unit): For every $m \in M$, if $const_m : \mathbb{R} \rightarrow M$ denotes the constant $m$ function then we have
		$$\underset{0}{\overset{1}{\int}} const_m(t) dt = m $$
		\item (Substitution): For every $x \in \mathbb{R}$, continuous function $f : \mathbb{R} \rightarrow M$ and continuously differentiable function $h : \mathbb{R} \rightarrow \mathbb{R}$ with $h(0) = 0$ we have that
		$$ \underset{0}{\overset{h(x)}{\int}} f(t)dt = \underset{0}{\overset{x}{\int}} f(h(t)) \cdot h^\prime(t) dt$$
	\end{enumerate}
\end{lem}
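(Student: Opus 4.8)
The plan is to set up a dictionary translating the module structure into the stated data and back, and then to check that each module axiom corresponds to one of the three data axioms. Recall that for the dependent adder $\mathbb{R}$ in $Top_{mtr}$ one has $F=\mathbb{R}\times\mathbb{R}$ with $p$ the second projection, so $\llbracket x\rrbracket\cong U\times\mathbb{R}$ for $x:U\to\mathbb{R}$ (and $\llbracket x\rrbracket\cong\mathbb{R}$ for a point $x:*\to\mathbb{R}$), with flattening map $f^\flat(u,a,b)=(u,\int_0^a f(u,t)\,dt)$; in particular $const_1^\flat(u,a,b)=(u,a)$.

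\textbf{The dictionary.} Given a dependent right $\mathbb{R}$-module $M$, put $\int_0^x f(t)\,dt:=\oversum{x}(f)\in M$ for $x\in\mathbb{R}$ and continuous $f:\mathbb{R}\to M$, via $x:*\to\mathbb{R}$ and $\llbracket x\rrbracket\cong\mathbb{R}$. First I would establish the point-restriction identity: for $x:U\to\mathbb{R}$, $f:U\times\mathbb{R}\to M$ and $u\in U$, naturality of $\oversum{}$ along the inclusion $u:*\to U$ (a morphism $x\circ u\to x$ in $\mathscr{C}/A$) gives $\oversum{x}(f)(u)=\int_0^{x(u)}f(u,t)\,dt$. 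Conversely, given the data this same formula defines $\oversum{x}(f)$: it lands in $Top_{mtr}$ by the Continuity axiom, and is natural in $x\in\mathscr{C}/A$ since precomposing along $\varphi:U\to U'$ with $y\circ\varphi=x$ leaves the upper limit $x(u)=y(\varphi(u))$ unchanged. So the two assignments are mutually inverse, and it remains to match axioms.

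\textbf{Matching the axioms.} The data's Continuity axiom is exactly the statement that $\oversum{x}(f):U\to M$ is a morphism of $Top_{mtr}$. The module Unit axiom $\oversum{const_1}(f\circ const_1^\flat)=f$ corresponds to the data's Unit axiom: using $const_1^\flat(u,a,b)=(u,a)$ and the point-restriction identity, the left-hand side evaluated at $(u,a)$ is $\int_0^1 const_{f(u,a)}(s)\,ds$, so the two axioms are interchangeable (take $U=*$, $f=const_m$ one way; plug in the other). For the last pair, one computes $\oversum{f}(g\circ f^\flat)(u,t)=\int_0^{f(u,t)}g\big(u,\int_0^t f(u,z)\,dz\big)\,ds$ using the adder's flattening map, so the module Sum Associativity axiom evaluated at $u$ reads
$$\int_0^{\int_0^{x(u)}f(u,t)\,dt}g(u,s)\,ds=\int_0^{x(u)}\Big(\int_0^{f(u,t)}g\big(u,\int_0^t f(u,z)\,dz\big)\,ds\Big)\,dt.$$
Setting $h_u(t):=\int_0^t f(u,z)\,dz$ (which is $C^1$ with $h_u(0)=0$ and $h_u'=f(u,-)$, by the fundamental theorem of calculus) and writing $r\cdot m:=\int_0^r const_m(s)\,ds$, the right-hand side is $\int_0^{x(u)}g(u,h_u(t))\cdot h_u'(t)\,dt$, so this is precisely the data's Substitution axiom applied to $s\mapsto g(u,s)$, the map $h_u$, and the point $x(u)$; conversely every instance of Substitution arises this way by taking $f:=h'$. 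Since a continuous map $U\to M$ is determined by its values on points, these pointwise identities assemble to equalities of morphisms.

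\textbf{Main obstacle.} The analytic input is minimal --- the fundamental theorem of calculus plus ``equal on points $\Rightarrow$ equal'' --- so the real work is bookkeeping: keeping the identifications $\llbracket x\rrbracket\cong U\times\mathbb{R}$, $\llbracket f\rrbracket\cong U\times\mathbb{R}\times\mathbb{R}$ and the formula for $f^\flat$ straight, and, above all, recognizing that the ``$\cdot$'' in the Substitution axiom must be read as the derived scalar action $r\cdot m=\int_0^r const_m$ (which one also checks is continuous, again by Continuity, so the statement is even well-posed). I expect the Sum-Associativity $\leftrightarrow$ Substitution translation to be the step demanding the most care.
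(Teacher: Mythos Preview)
Your proposal is correct and follows essentially the same approach as the paper: establish the dictionary via naturality along point inclusions $*\to U$, identify Continuity with well-definedness in $Top_{mtr}$, match the two Unit axioms directly, and translate Sum Associativity into Substitution via the Fundamental Theorem of Calculus using the antiderivative $h_u(t)=\int_0^t f(u,z)\,dz$. Your treatment is in fact more detailed than the paper's, which dispatches the last equivalence in a single sentence; in particular your explicit reading of the ``$\cdot$'' in the Substitution axiom as the derived action $r\cdot m:=\int_0^r const_m(s)\,ds$ is exactly right and is left implicit in the paper.
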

\begin{proof}
	If the Continuity Axiom is satisfied, then the function sending $x : U \rightarrow \mathbb{R}$ and $f : \mathbb{R} \rightarrow M$ to  $\underset{0}{\overset{x(u)}{\int}} f(u,t) dt$ serves as a natural dependent sum function $\oversum{x} f$ for the dependent right $\mathbb{R}$-module $M$.
	Also every function
	$\oversum{x}: \mathrm{Hom}_{Top_{mtr}}( \llbracket x \rrbracket , M ) \rightarrow \mathrm{Hom}_{Top_{mtr}}(U, M) $ that is natural in $x : U \rightarrow \mathbb{R}$
	is necessarily of the above form, because for any $u \in U$ we have a map $\langle u \rangle : 1 \rightarrow U$, and then naturality tells us that
	$(\oversum{x} f) \circ \langle u \rangle = \oversum{x \circ \langle u \rangle} f \circ (\langle u \rangle \underset{A}{\times} id_F)  $, and this implies that the dependent sum functions for all maps of the form $U \rightarrow \mathbb{R}$ are determined by dependent sum functions for maps of the form $1 \rightarrow \mathbb{R}$
	
	The Unit Axiom we stated above is easily seen to be equivalent to the unit axiom of dependent right $\mathbb{R}$-modules.
	The Substitution Axiom stated above is equivalent to the Sum Associativity Axiom of dependent right $\mathbb{R}$-modules, because of the Fundamental Theorem of Calculus.
\end{proof}
 
So in summary, a dependent right $\mathbb{R}$-module is a space in which one can continuously form integrals of functions, and where one has a \enquote{integration by substitution} rule.

If one wants to study dependent right $\mathbb{R}$-modules, it is probably a good idea to only look at modules that have a zero element $0_M \in M$ satisfying$\underset{0}{\overset{0}{\int}} f(t) dt = 0_M$ for all $f : \mathbb{R} \rightarrow M$.
Without such a zero element, dependent right $\mathbb{R}$-modules can be empty or disconnected, while with a zero element they are always path-connected.

For example, every Banach space is a dependent right $\mathbb{R}$-module, with Bochner integrals as dependent sums.

\begin{thm}
	Let $X$ be a Banach space.
	For every continuous function $x : U \rightarrow \mathbb{R}$ and continuous function $f : U \times \mathbb{R} \rightarrow X$ define $\oversum{x}(f) : U \rightarrow X$ by using the Bochner integral
	$$\oversum{x}(f)(u) := \underset{0}{\overset{x(u)}{\int}} f(u,t) dt = \begin{cases}
		\underset{[0,x(u)]}{\int} f(u,t)dt &, x(u) \geq 0\\
		- \underset{[x(u),0]}{\int} f(u,t)dt &, x(u) < 0
	\end{cases}$$
	Then $X$ is a right $\mathbb{R}$-module.
\end{thm}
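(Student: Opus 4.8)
The plan is to verify the two axioms of a dependent right $\mathbb{R}$-module directly: the Unit axiom and the Sum Associativity axiom, using the characterization proven in the preceding lemma, i.e. it suffices to check Continuity, Unit, and Substitution for the Bochner integral with the sign convention stated. All three are standard properties of the Bochner integral, so the proof is essentially a matter of citing the right facts from Bochner integration theory and handling the sign bookkeeping for negative upper limits.

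First I would dispose of Continuity. Given continuous $x : U \to \mathbb{R}$ and continuous $f : U \times \mathbb{R} \to X$, I claim $u \mapsto \underset{0}{\overset{x(u)}{\int}} f(u,t)\,dt$ is continuous. The argument mirrors Lemma \ref{lemmaIntegralContinuous} almost verbatim: reduce to compactly generated $U$, restrict to a compact $K \subseteq U$, use uniform continuity of $f$ on $K \underset{\mathbb{R}}{\times} F$ and of $x$ on a neighborhood, and split the integral $\underset{0}{\overset{x(w)}{\int}} = \underset{x(v)}{\overset{x(w)}{\int}} + \underset{0}{\overset{x(v)}{\int}}$, bounding the first piece by $|x(w)-x(v)| \cdot \sup\|f\|$ and the second by $|x(v)| \cdot \sup_t \|f(w,t)-f(v,t)\|$, where all suprema are finite by compactness. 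The only new point is that $\|\cdot\|$ is the Banach norm rather than the real absolute value, but the triangle inequality $\|\underset{a}{\overset{b}{\int}} g(t)\,dt\| \le \underset{a}{\overset{b}{\int}} \|g(t)\|\,dt$ for the Bochner integral makes the estimates go through identically.

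Next, the Unit axiom: $\underset{0}{\overset{1}{\int}} const_m(t)\,dt = \underset{[0,1]}{\int} m\,dt = m$, which is immediate since the Bochner integral of a constant over an interval of length $1$ is that constant. For Substitution, given $x \in \mathbb{R}$, continuous $f : \mathbb{R} \to X$, and continuously differentiable $h : \mathbb{R} \to \mathbb{R}$ with $h(0) = 0$, I need $\underset{0}{\overset{h(x)}{\int}} f(t)\,dt = \underset{0}{\overset{x}{\int}} f(h(t))\,h'(t)\,dt$. This is the change-of-variables formula for the Bochner integral, which follows from the scalar case by composing with continuous linear functionals (the Hahn--Banach / Pettis argument: two Bochner-integrable vectors agree iff all functionals agree on them) together with the vector-valued Fundamental Theorem of Calculus, $\frac{d}{dx}\underset{0}{\overset{x}{\int}} g(t)\,dt = g(x)$ for continuous $g$. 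One must check the sign conventions are consistent: for $x < 0$ both sides are defined via the stated $-\int_{[\cdot,0]}$ convention, and the identity $\frac{d}{dx}\underset{0}{\overset{x}{\int}} g = g(x)$ holds for all real $x$ with this convention, so the FTC argument applies uniformly.

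**The main obstacle** is not any single deep fact — each ingredient (Bochner triangle inequality, dominated/continuous dependence, change of variables, vector-valued FTC) is classical — but rather ensuring the sign conventions for $x < 0$ are threaded consistently through \emph{all three} conditions simultaneously, since the lemma's Substitution axiom allows $h$ to send positive inputs to negative outputs and vice versa. I would handle this by first proving the key identity $\frac{d}{dx} \oversum{x}(f)(u) = f(u,x)$ as a function of the real variable $x$ on \emph{all} of $\mathbb{R}$ (with the sign convention built in), reducing everything else to it, and only then invoke the preceding lemma to conclude that $X$ is a dependent right $\mathbb{R}$-module. The remaining verification of the module axioms from Continuity, Unit, and Substitution is then exactly the content of that lemma and requires nothing further.
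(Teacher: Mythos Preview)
Your proposal is correct and follows essentially the same route as the paper: reduce via the preceding lemma to Continuity, Unit, and Substitution, handle Continuity by the same compact-exhaustion estimate as Lemma~\ref{lemmaIntegralContinuous} with the Bochner triangle inequality in place of absolute value, dispose of Unit trivially, and derive Substitution from the vector-valued Fundamental Theorem of Calculus. The paper's version of Substitution is phrased as ``chain rule for $X$-valued maps plus vector FTC'' (citing \cite{liu2015stochastic}) rather than your Hahn--Banach/Pettis reduction, but your primary suggestion of proving $\frac{d}{dx}\int_0^x f = f(x)$ and deducing the change-of-variables from it is exactly that argument. One small point you omit that the paper does include: a sentence checking that the Bochner integral $\int_{[0,x(u)]} f(u,t)\,dt$ actually exists, via Bochner's criterion (continuous hence Bochner-measurable, and $\|f(u,\cdot)\|$ Lebesgue-integrable on the compact interval); you should add this.
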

\begin{proof}
	Take $x : U \rightarrow \mathbb{R}$, $f : \llbracket x \rrbracket \rightarrow X$ and $u \in U$. Assume without loss of generality $x(u) \geq 0$. We need to show that the integral $\underset{[0,x(u)]}{\int} f(u,t) dt$ exists.
	Since $f(u,-) : [0,x(u)] \rightarrow X$ is a continuous function, it is Bochner-measurable.
	The composite function $[0,x(u)] \overset{f(u,-)}{\rightarrow} X \overset{||.||_X}{\rightarrow} \mathbb{R}$ is Lebesgue-integrable, because it is continuous and $[0,x(u)]$ is compact.
	Therefore by Bochner's criterium, the function $f(u,-) : [0,x(u)] \rightarrow X$ is Bochner-integrable, and the integral $\underset{[0,x(u)]}{\int} f(u,t) dt$ exists in $X$.
	
	The Continuity Axiom can be shown exactly like in Lemma \ref{lemmaIntegralContinuous}.
	
	The Unit Axiom $\underset{[0,1]}{\int} x dt = x$ is obvious, because the constant $x$ function is a simple function.
	
	Let us now prove the Substitution Axiom.
	For any differentiable function $f : [0,x] \rightarrow [0,y]$ between real intervals and every differentiable function $g : [0,y] \rightarrow X$ going into a Banach space $X$, we have the chain rule $(g \circ f)^\prime(t) = g^\prime(f(t)) \cdot f^\prime(t)$.
	
	Also for any continuously differentiable function $f : [0,x] \rightarrow X$ going into a Banach space $X$ we have a Fundamental Theorem of Calculus, stating $f(x) - f(0) = \underset{0}{\overset{x}{\int}} f^\prime(t)dt$. This is proven in \cite[Proposition A.2.3]{liu2015stochastic}.
	
	By combining the chain rule with the Fundamental Theorem of Calculus we obtain the substitution rule.
\end{proof}

\begin{thm}
	Let $X, Y$ be Banach spaces.
	Then a continuous linear operator $T: X \rightarrow Y$ is the same thing as 
	an $\mathbb{R}$-linear map $T: X \rightarrow Y$ in the dependent module sense between the corresponding dependent $\mathbb{R}$-modules.
\end{thm}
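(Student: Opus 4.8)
The plan is to show that the two notions pick out exactly the same maps: a continuous linear operator $T : X \to Y$, equipped with no extra data, is an $\mathbb{R}$-linear map between the Bochner-integral dependent modules of the previous theorem, and every $\mathbb{R}$-linear map arises this way. So I would prove two implications and observe that both constructions leave the underlying continuous map unchanged, hence are mutually inverse.

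For the implication ``continuous linear $\Rightarrow$ module-linear'', I would unwind the defining equation $T\circ(\oversum{x}f) = \oversum{x}(T\circ f)$ for the Bochner-integral module structures: it reduces to the assertion that $T\big(\int_0^{x(u)}f(u,t)\,dt\big) = \int_0^{x(u)}T(f(u,t))\,dt$ for all continuous $x : U\to\mathbb{R}$ and $f : U\times\mathbb{R}\to X$. This is the classical fact that a bounded linear operator commutes with the Bochner integral --- true for simple functions by linearity, and extended to all Bochner-integrable functions by continuity of $T$ together with the defining $L^1$-approximation property of the Bochner integral --- so I would simply quote it.

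For the converse, let $T$ be an $\mathbb{R}$-linear map of dependent modules; being a morphism of $Top_{mtr}$ it is already continuous, so it suffices to prove $T$ is linear, since a continuous linear map between Banach spaces is automatically bounded. Compatibility with scalars is direct: for $\lambda\in\mathbb{R}$ and $a\in X$, the constant function $c_a : \mathbb{R}\to X$ satisfies $\oversum{\lambda}(c_a) = \int_0^{\lambda}a\,dt = \lambda a$ (including the sign convention for $\lambda<0$ built into the adder $\mathbb{R}$), so the module-linearity of $T$ yields $T(\lambda a) = \oversum{\lambda}(c_{T(a)}) = \lambda T(a)$. For additivity I would, given $a,b\in X$ and $0<\epsilon<1$, take the continuous function $f_\epsilon : \mathbb{R}\to X$ that equals $a$ on $(-\infty,1-\epsilon]$, is affine on $[1-\epsilon,1+\epsilon]$, and equals $b$ on $[1+\epsilon,\infty)$; a short computation shows $\int_0^2 f_\epsilon(t)\,dt = a+b$ for every such $\epsilon$, so $T(a+b) = \oversum{2}(T\circ f_\epsilon) = \int_0^2 T(f_\epsilon(t))\,dt$. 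Letting $\epsilon\to 0$, the integrands converge pointwise almost everywhere on $[0,2]$ to the function taking value $T(a)$ on $[0,1)$ and $T(b)$ on $(1,2]$ by continuity of $T$, and they are bounded uniformly in $\epsilon$ by $\sup_{\theta\in[0,1]}\|T((1-\theta)a+\theta b)\|_Y<\infty$, since $T$ is continuous on the compact segment from $a$ to $b$; the dominated convergence theorem for Bochner integrals then gives $T(a+b) = T(a)+T(b)$. Hence $T$ is linear and continuous.

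The main obstacle is this last additivity step: I need to manufacture an explicit family of continuous $X$-valued functions with integral constantly equal to $a+b$ so that $T$-commutation turns the problem into a limit I can actually evaluate, and then justify interchanging the limit with the Bochner integral via a uniform bound coming from compactness of the segment $[a,b]$. Everything else is routine bookkeeping together with the standard interchange of bounded operators and Bochner integration.
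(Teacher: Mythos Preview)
Your proposal is correct and follows essentially the same strategy as the paper: the forward direction quotes commutation of bounded operators with Bochner integrals, and the converse constructs exactly the same piecewise-affine family $f_\epsilon$ (the paper calls it $\gamma_\epsilon$) with $\int_0^2 f_\epsilon = a+b$ and passes to the limit $\epsilon\to 0$. Your write-up is a bit more careful---you also verify homogeneity explicitly and justify the final limit via dominated convergence, whereas the paper relies on the fact that continuity plus additivity already gives $\mathbb{R}$-linearity and leaves the limit evaluation implicit---but the argument is the same.
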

\begin{proof}
	If $T: X \rightarrow Y$ is a continuous linear operator between Banach spaces, then for every $x \in \mathbb{R}$ and continuous function $f : \mathbb{R} \rightarrow X$ we have that
	$$T\underset{0}{\overset{x}{\int}} f(t)dt = \underset{0}{\overset{x}{\int}} Tf(t)dt $$
	because every Bochner integral is a limit of integrals of simple function, and continuous functions commute with limits, and linear operators commute with integrals of simple functions.
	This implies that $T$ is a $\mathbb{R}$-linear map in the dependent module sense.
	Conversely, take an $\mathbb{R}$-linear map in the dependent module sense $T : X \rightarrow Y$. By definition $T$ is a morphism in $Top_{mtr}$, so $T$ is continuous. We need to show that for all $x, y \in X$ that $T(x+y) = T(x) + T(y)$.
	Define $\phi_{x,y}: [0,1] \rightarrow X$, $\phi(t) := t\cdot y + (1-t)\cdot x$.
	For every $\epsilon >0$ with $\epsilon < 1$ define $\gamma_\epsilon : [0,2] \rightarrow X$ by
	$$\gamma_\epsilon (t) := \begin{cases}
		x&, t < 1 - \epsilon\\
		y&, t > 1 + \epsilon \\
		\phi_{x,y}(\frac{t - 1 + \epsilon}{2\epsilon})&, 1- \epsilon \leq t \leq 1+ \epsilon
	\end{cases} $$
	Then each $\gamma_\epsilon$ is continuous and
	$$T(x+y) = T(\underset{\epsilon \rightarrow 0}{\mathrm{lim}}   \underset{0}{\overset{2}{\int}} \gamma_\epsilon(t)dt  ) = \underset{\epsilon \rightarrow 0}{\mathrm{lim}}   \underset{0}{\overset{2}{\int}} T(\gamma_\epsilon(t))dt = T(x) + T(y) $$
	so $T$ is a linear operator in the usual Banach space sense.
\end{proof}

\subsection{$Cat$-modules and cocomplete categories}

If $M$ is a cocomplete category, then $M$ is a $Cat$-module in the following way:

For every $x : U \rightarrow Cat$ we define a function
$$\oversum{x} : \mathrm{Hom}_{hCAT}(\llbracket x \rrbracket, M) \rightarrow \mathrm{Hom}_{hCAT}(U, M) $$
in the following way:
For every $f : \llbracket x \rrbracket \rightarrow M$ and we define a functor $\oversum{x}(f)^\heartsuit : U \rightarrow M$ by sending $u \in U$ to
$$\oversum{x}(f)^\heartsuit (u) := \underset{i \in x(u)}{\mathrm{colim} } f(u,i) $$
and sending a morphism $\alpha :u  \rightarrow v$ in $U$ to the canonical map
$$ \underset{i \in x(u)}{\mathrm{colim} } f(u,i) \rightarrow \underset{i \in x(v)}{\mathrm{colim} } f(v,i)$$
in $M$.
We define $\oversum{x}(f) := \mathrm{Ho}(\oversum{x} (f)^\heartsuit)$.
Then $M$ satisfies the Unit Axiom, because if we have an object $m \in M$ and consider the diagram $f : 1 \rightarrow M$ sending the unique object of $1$ to $m$, then the colimit of $f$ is isomorphic to $m$ again.
$M$ satisfies the Sum Associativity axiom because for every functor $F: I \rightarrow Cat$ and $G : \oplaxcolim{i \in I} F(i) \rightarrow M$ we have a canonical isomorphism
$$\underset{(i,j) \in \oplaxcolim{i \in I} F(i)}{\mathrm{colim}}  G(i,j) \cong \underset{i \in I}{\mathrm{colim} } \underset{j \in F(i)}{\mathrm{colim} } G(i,j)$$
in $M$. So $M$ is a right $Cat$-module.

If $M$ is a complete category, then $M$ is also a $Cat$-module, because in any such category we have a canonical isomorphism
$$\underset{(i,j) \in \oplaxcolim{i \in I} F(i)}{\mathrm{lim}}  G(i,j) \cong \underset{i \in I}{\mathrm{lim} } \underset{j \in F(i)}{\mathrm{lim} } G(i,j)$$

\begin{thm}
	Let $C$, $D$ be cocomplete categories, and regard $C$, $D$ as $Cat$-modules with colimits as dependent sums.
	Then a functor $F : C \rightarrow D$ is a $Cat$-linear map if and only if $F$ preserves small colimits, in the usual sense of sending small colimit cocones to small colimit cocones.
	
\end{thm}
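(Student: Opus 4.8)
The plan is to translate $Cat$-linearity, through the explicit description of the $Cat$-module structures on $C$ and $D$, into a statement about the canonical comparison maps between colimits. Recall first what the defining equation says concretely: for $x : U \to Cat$ and $f : \llbracket x \rrbracket \to C$, using $\llbracket x \rrbracket \cong \oplaxcolim{u\in U} x(u)$, the morphism $\oversum{x}(f)$ of $hCAT$ is represented by the functor $U \to C$ sending $u \mapsto \mathrm{colim}_{i\in x(u)} f(u,i)$, while $\oversum{x}(F\circ f)$ is represented by $u \mapsto \mathrm{colim}_{i\in x(u)} F(f(u,i))$; since morphisms of $hCAT$ coincide exactly when the underlying functors are isomorphic in $CAT$, the equation $F\circ\oversum{x}(f) = \oversum{x}(F\circ f)$ is equivalent to the existence of a natural isomorphism between these two functors $U\to D$. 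The forward implication is then immediate: if $F$ preserves small colimits, each comparison map $\mathrm{colim}_{i\in x(u)} Ff(u,i) \to F\,\mathrm{colim}_{i\in x(u)} f(u,i)$ is an isomorphism and these are natural in $u$ (comparison maps always form a natural transformation, which here is a natural isomorphism), so $F$ is $Cat$-linear.

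For the converse, suppose $F$ is $Cat$-linear and fix a small diagram $G : I \to C$ with colimit cocone $\eta_i : G(i)\to L$; I must show $(FL, F\eta)$ is a colimit cocone for $FG$. The idea is to apply $Cat$-linearity to data chosen so that the comparison map for $G$ lands in a naturality square flanked by isomorphisms. Let $U$ be the walking arrow $\{0\to 1\}$ and let $x : U \to Cat$ be the functor with $x(0) = I$, $x(1) = \mathbf 1$ (the terminal category) and $x(0\to1) : I \to \mathbf 1$ the unique functor; a direct computation identifies $\llbracket x \rrbracket \cong \oplaxcolim{u\in U} x(u)$ with $I^{\triangleright}$, the category obtained from $I$ by freely adjoining a terminal object $\top$. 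Let $f := \bar G : I^{\triangleright} \to C$ be the extension of $G$ by its colimit cocone, so $\bar G|_I = G$, $\bar G(\top) = L$ and $\bar G(i\to\top) = \eta_i$.

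Unwinding the dependent sums, $\oversum{x}(\bar G)$ is represented by the functor $\{0\to1\}\to C$ which is constant at $L$ and sends the arrow to $\mathrm{id}_L$ (because $(L,\eta)$ being a colimit forces the induced map $\mathrm{colim}_I G \to L$ to be the identity), whereas $\oversum{x}(F\bar G)$ is represented by the functor $\{0\to1\}\to D$ sending $0\mapsto\mathrm{colim}_I FG$, $1\mapsto FL$ and the arrow to the canonical comparison map $\phi_G : \mathrm{colim}_I FG \to FL$ induced by the cocone $\{F\eta_i\}$. $Cat$-linearity provides a natural isomorphism between $F\circ\oversum{x}(\bar G)$ and $\oversum{x}(F\bar G)$; reading off its naturality square for the arrow $0\to1$ exhibits $\phi_G$ as the composite of an isomorphism $FL\xrightarrow{\sim}FL$ with the inverse of an isomorphism $FL\xrightarrow{\sim}\mathrm{colim}_I FG$, so $\phi_G$ is an isomorphism. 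Since $\phi_G$ is an isomorphism exactly when $(FL,F\eta)$ is a colimit cocone for $FG$, i.e.\ when $F$ preserves the colimit of $G$, and $G$ was arbitrary, $F$ preserves all small colimits.

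I expect the main obstacle to be the verification, under the identification $\llbracket x \rrbracket \cong I^{\triangleright}$, that the value of $\oversum{x}(F\bar G)$ on the arrow $0\to 1$ is precisely the comparison map $\phi_G$: this requires chasing the ``canonical map'' clause in the definition of the $Cat$-module structure --- the map produced, via the universal property of the colimit over $x(1) = \mathbf 1$, from the arrows $(0,i)\to(1,\ast)$ of the oplax colimit, which under the identification are exactly the cocone arrows $i\to\top$. The remaining steps are standard manipulations of comparison maps between colimits.
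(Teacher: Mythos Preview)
Your proposal is correct and follows essentially the same strategy as the paper for the direction ``$Cat$-linear $\Rightarrow$ colimit-preserving'': both take $U$ to be the walking arrow and engineer data $x,f$ so that the naturality square of the isomorphism $F\circ\oversum{x}(f)\cong\oversum{x}(F\circ f)$ forces the comparison map to be invertible. The only real difference is in the choice of $x$: the paper takes $x$ \emph{constant} at $I$, so that $\llbracket x\rrbracket\cong I\times U$, and encodes the cocone as a natural transformation from $G$ to the constant-at-$L$ diagram; you instead take $x(0)=I$, $x(1)=\mathbf 1$, so that $\llbracket x\rrbracket\cong I^{\triangleright}$, and encode the cocone directly as $\bar G$. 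Your variant has the advantage that the value at $1$ is a colimit over $\mathbf 1$, hence literally $FL$, whereas the paper must identify $\mathrm{colim}_{i\in I}$ of a constant diagram; on the other hand the paper's setup keeps $\llbracket x\rrbracket$ a product, which makes the description of $f$ as ``two functors and a natural transformation'' slightly more transparent. You also spell out the easy implication (colimit-preserving $\Rightarrow$ $Cat$-linear), which the paper leaves implicit. Your closing remark about verifying that the arrow component of $\oversum{x}(F\bar G)$ is exactly the comparison map $\phi_G$ is well-placed: this is the one point where the module structure has to be unfolded, and it goes through as you indicate.
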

\begin{proof}
	Let $F : C \rightarrow D$ be a $Cat$-linear map.
	Let $G : I \rightarrow C$ be a functor.
	It is immediately clear that $F(\underset{i \in I}{\mathrm{colim}} G(i) ) \cong \underset{i \in I}{\mathrm{colim}} F(G(i))$. But for $F$ to preserve colimits it does not just need to send colimit objects to colimit objects, but it needs to preserve colimit cocones.
	Let $U = (\cdot \rightarrow \cdot)$ be the category with two objects and one morphism between them. Let $x : U \rightarrow Cat$ be the constant functor sending everything to $I \in Cat$. Then $\llbracket x \rrbracket \cong I \times U$.
	To define a functor $g : I \times U \rightarrow Cat$ we need to define two functors $g_0, g_1 : I \rightarrow Cat$ and a natural transformation $\tau : g_0 \rightarrow g_1$.
	Let $g_0 := G$. Let $g_1$ be the constant functor sending every $i \in I$ to $\underset{j \in I}{\mathrm{colim}} G(j)$.
	Let $\tau : g_0 \rightarrow g_1$ be the colimiting cocone of $G$. So for every $i \in I$ the map $\tau_i : G(i) \rightarrow \underset{j \in I}{\mathrm{colim}} G(j)$ is the canonical colimit inclusion map.
	
	Since $F$ is a $Cat$-linear map we have
	$$F \circ \oversum{x} g = \oversum{x} F \circ g $$
	
	This means we have a commutative diagram
	$$\xymatrix{ F( \underset{i \in I}{\mathrm{colim}} G(i)  )\ar[d]  \ar[r]^\sim & \underset{i \in I}{\mathrm{colim}} F(G(i)) \ar[d] \\
	   F( \underset{i \in I}{\mathrm{colim}} g_1(i)  ) \ar[r] & \underset{i \in I}{\mathrm{colim}} F(g_1(i))   } $$
	Now $g_1$ is a constant functor, so $ \underset{i \in I}{\mathrm{colim}} g_1(i) =  \underset{i \in I}{\mathrm{colim}} G(i)$ and $ \underset{i \in I}{\mathrm{colim}} F(g_1(i)) = F( \underset{i \in I}{\mathrm{colim}} G(i)) $.
	The above diagram then implies that $F$ preserves not just the colimit object but the whole colimiting cocone.
\end{proof}

Similarly, if $C$, $D$ are complete categories regarded as $Cat$-modules with limits as dependent sums, then a functor $F : C \rightarrow D$ is a $Cat$-linear map if $F$ preserves small limits.

\section{Left Modules over Dependent Adders}\label{sectionLeftModules}

\begin{dfn}
	
	Given a category with pullbacks $\mathscr{C}$ and a dependent adder $A$, an \textit{$A$-dependent left module} consists of
	\begin{enumerate}
		\item An object $M \in \mathscr{C}$
		\item A morphism $p : F_M \rightarrow M$ in $\mathscr{C}$.
		For every $m : U \rightarrow M$ we write $\llbracket m \rrbracket_M$ for the pullback $U \underset{M}{\times} F_M$.
		\item For every $m: U \rightarrow M$ a function of sets
		$$\oversum{m} : \mathrm{Hom}_{\mathscr{C}}(\llbracket m \rrbracket_M, A ) \rightarrow \mathrm{Hom}_{\mathscr{C}}(U,M) $$
		natural in $m \in \mathscr{C}/M$.
		\item For every $m : U \rightarrow M$ and $f : \llbracket m \rrbracket_M \rightarrow A$ a flattening function
		$$f^{M,\flat} : \llbracket f \rrbracket \rightarrow \llbracket \oversum{m} f \rrbracket_M $$
		over $U$.
		For $m : U \rightarrow M$, $f : \llbracket m \rrbracket_M \rightarrow A$ and $g : \llbracket \oversum{m} f \rrbracket_M \rightarrow A$  we define $f \boxtimes g := \oversum{f} g \circ f^{M,\flat}$.
	\end{enumerate}
	such that the following axioms are satisfied
	\begin{enumerate}
		\item Unit Axiom: For $m : U \rightarrow M$ and $const_1 : \llbracket m \rrbracket_M \rightarrow A$ the constant $1_A$ function, we demand
		$$\oversum{m} const_1 = m$$
		\item Sum Associativity Axiom: For $m : U \rightarrow M$, $f : \llbracket m \rrbracket_M \rightarrow A$ and $g : \llbracket \oversum{m} f \rrbracket_M \rightarrow A$ we demand that
		$$ \oversum{\oversum{m}f}g = \oversum{m} \oversum{f} g \circ f^{M,\flat} $$
		\item Flatten Associativity Axiom For $m : U \rightarrow M$, $f : \llbracket m \rrbracket_M \rightarrow A$ and $g : \llbracket \oversum{m} f \rrbracket_M \rightarrow A$ the following diagram commutes
		$$\xymatrix{
			\llbracket g \circ f^{M,\flat} \rrbracket \ar[d]_{ f^{M,\flat} \underset{A}{\times} id_F  } \ar[r]^(.55){(g \circ f^{M,\flat})^\flat} & \llbracket f \boxtimes g \rrbracket \ar[d]^{(f \boxtimes g)^{M,\flat}  } \\
			\llbracket g \rrbracket \ar[r]^(.45){g^{M,\flat}} & \llbracket \oversum{x} f \boxtimes g \rrbracket 
		} $$
	\end{enumerate}
	
\end{dfn}

The interval $[0,n]$ has both a right and a left $[0,1]$-dependent module structure. The right module structure comes from the fact that for any $x \in [0,1]$ and continuous map $f : [0,x] \rightarrow [0,n]$ we have $\underset{0}{\overset{x}{\int}} f(t)dt \in [0,n]$.
The left module structure comes from the fact that for any $x \in [0,n]$ and continuous map $f : [0,x] \rightarrow [0,1]$ we have $\underset{0}{\overset{x}{\int}} f(t)dt \in [0,n] $.

\subsection{$Top_{open}$ as a left $Set$-module using étalé spaces of presheaves}

The category of topological spaces and open maps $Top_{open}$ has, up to isomorphism, the structure of a left $Set$-dependent module. For any topological space $X \in Top_{open}$ we define an $X$-indexed family of sets to be a presheaf $\mathscr{F}$ on $X$, and define the sum of such a sheaf to be its étalé space $Et(\mathscr{F})$.

Let $\mathscr{C} = hCAT$ be the homotopy category of small categories from Section \ref{categories}.
Let $A := Set$ be the category of very small sets,
$Set$ is a dependent adder in $hCAT$, quite similarly to how $Cat$ is a dependent adder in $hCAT$.
For any $x : U \rightarrow Set$ we have $\llbracket x \rrbracket \cong \oplaxcolim{u \in U} x(u)$, where $x(u)$ is regarded as a discrete category.
The dependent sums of $Set$ are the coproducts: Given $f : \llbracket x \rrbracket \rightarrow Set$ we define
$\oversum{x}f := \mathrm{Ho}( \lambda u. \underset{t \in x(u)}{\coprod} f(u,t)  )$.
The flattening map of $f^\flat$ of $f$ is given by taking $\mathrm{Ho}$ of the natural isomorphism
$$ \oplaxcolim{(u,t) \in \oplaxcolim{u \in U} x(u)} f(u,t) \rightarrow \oplaxcolim{u \in U} \underset{t \in x(u)}{\coprod} f(u,t)$$
where one needs to note that $\underset{t \in x(u)}{\coprod} f(u,t)$ is in fact the same thing as $\oplaxcolim{t \in x(u)} f(u,t)$, because oplax colimits over discrete categories are just coproducts.
With this $Set$ is a dependent adder in $hCAT$.

Now let $M := Top_{open}$ the category of very small topological spaces with open maps.

We quickly recall the construction of the étalé space $Et(\mathscr{F})$ of a presheaf $\mathscr{F}$ on a topological space $X$. See \cite[Section II.6]{maclane1994sheaves} for a reference. For each $x \in X$ let $\mathscr{F}_x$ be the stalk of $\mathscr{F}$ at x. The underlying set of $Et(\mathscr{F})$ is the coproduct of all the stalks of $\mathscr{F}$.
$$ Et(\mathscr{F}) = \underset{x \in X}{\coprod} \mathscr{F}_x$$
For every open subset $U \subseteq X$ and section $s \in \mathscr{F}(U)$, we have for every $x \in U$ an element $s_x \in \mathscr{F}_x$ and can define the set $\epsilon_{\mathscr{F}}(U,s) := \{ (x,s_x) | x \in U  \} \subseteq Et(\mathscr{F}) $. We put on $Et(\mathscr{F})$ the topology generated by the sets $\epsilon_{\mathscr{F}}(U,s)$ for all open $U \subseteq X$ and $s \in \mathscr{F}(U)$. The notation $\epsilon_{\mathscr{F}}(U,s)$ will be used below a few times.

Étalé spaces are functorial: If $\mathscr{F}, \mathscr{G}$ are presheaves on $X$ and $ \tau: \mathscr{F} \rightarrow \mathscr{G}$ is a morphism of presheaves, then for every $x \in X$ we get a map on stalks $\tau_x : \mathscr{F}_x \rightarrow \mathscr{G}_x$, and these assemble together into a map on étalé spaces $Et(\tau): Et(\mathscr{F}) \rightarrow Et(\mathscr{G})$. This map is always an open map, because $Et(\tau)(\epsilon_\mathscr{F}(U,s)) = \epsilon_\mathscr{G}(U, \tau_U(s))$. With this construction $Et : \mathrm{Psh}(X) \rightarrow Top_{open}$ is a functor from the category of presheaves on $X$ to the category of topological spaces with open maps.

For any open map $f : X \rightarrow Y$ in $Top_{open}$ and every presheaf $\mathscr{F}$ on $Y$ define a presheaf $f^*(\mathscr{F})$ on $X$ by sending $U \in \Ouv(X)$ to $f^*(\mathscr{F})(U) := \mathscr{F}(f(U))$.
\begin{lem} \label{lemmaEtFunctorial2}
	For every open map $f : X \rightarrow Y$ in $Top_{open}$ and every presheaf $\mathscr{F}$ on $Y$ there is a natural open map
	$$ Et_f :  Et(f^*(\mathscr{F})) \rightarrow Et(\mathscr{F})$$ in $Top_{open}$
\end{lem}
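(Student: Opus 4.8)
The plan is to define $Et_f$ stalkwise. Since $f$ is open, for each $x\in X$ and each open $U\ni x$ the image $f(U)$ is an open neighbourhood of $f(x)$, so there is a germ map $\mathscr{F}(f(U))\to\mathscr{F}_{f(x)}$; because $(f^*\mathscr{F})(U)=\mathscr{F}(f(U))$ by definition and these germ maps are compatible with the restriction maps of $f^*\mathscr{F}$, they induce a map of stalks $\lambda_x\colon(f^*\mathscr{F})_x\to\mathscr{F}_{f(x)}$. (In fact $\lambda_x$ is bijective, since by openness of $f$ the opens $f(U)$ with $U\ni x$ are cofinal among the neighbourhoods of $f(x)$, but we will not need this.) On underlying sets I then define $Et_f(x,\sigma):=(f(x),\lambda_x(\sigma))$, using $Et(f^*\mathscr{F})=\coprod_{x\in X}(f^*\mathscr{F})_x$ and $Et(\mathscr{F})=\coprod_{y\in Y}\mathscr{F}_y$.

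First I would check openness. Recall that the sets $\epsilon_{\mathscr{F}}(V,t)$ form a basis for the topology of $Et(\mathscr{F})$ (two such sets meet only at points where the corresponding sections have equal germ, hence agree on a smaller open), and likewise the sets $\epsilon_{f^*\mathscr{F}}(U,s)$ with $U\subseteq X$ open and $s\in(f^*\mathscr{F})(U)=\mathscr{F}(f(U))$ form a basis for $Et(f^*\mathscr{F})$. Unwinding the definition of $\lambda_x$ gives $Et_f(x,s_x)=(f(x),s_{f(x)})$ for $x\in U$, whence $Et_f\bigl(\epsilon_{f^*\mathscr{F}}(U,s)\bigr)=\epsilon_{\mathscr{F}}(f(U),s)$, which is open because $f$ is open. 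So $Et_f$ carries basic opens to opens and is an open map.

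Next, continuity. Let $\epsilon_{\mathscr{F}}(V,t)$ be a basic open of $Et(\mathscr{F})$ and let $(x,\sigma)$ lie in its preimage. Write $\sigma=s_x$ with $U\ni x$ open and $s\in\mathscr{F}(f(U))$. The assumption $Et_f(x,\sigma)\in\epsilon_{\mathscr{F}}(V,t)$ means $f(x)\in V$ and $s_{f(x)}=t_{f(x)}$ in $\mathscr{F}_{f(x)}$, so there is an open $W$ with $f(x)\in W\subseteq f(U)\cap V$ and $s|_W=t|_W$. Replace $U$ by $U\cap f^{-1}(W)$: this is still an open neighbourhood of $x$, now with $f(U)\subseteq W$, the section $s|_{f(U)}\in\mathscr{F}(f(U))=(f^*\mathscr{F})(U)$ still represents $\sigma$, and $s|_{f(U)}=t|_{f(U)}$. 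Then $\epsilon_{f^*\mathscr{F}}(U,s|_{f(U)})$ is a basic open neighbourhood of $(x,\sigma)$, and by the image computation above $Et_f$ sends it into $\epsilon_{\mathscr{F}}(f(U),t|_{f(U)})\subseteq\epsilon_{\mathscr{F}}(V,t)$. Hence the preimage is open, $Et_f$ is continuous, and so it is a morphism in $Top_{open}$.

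Finally, naturality: for a morphism of presheaves $\tau\colon\mathscr{F}\to\mathscr{G}$ on $Y$ the square built from $Et_f$, $Et(\tau)$ and $Et(f^*\tau)$ commutes, since stalkwise $\lambda_x$ is visibly natural in $\mathscr{F}$ (germ maps commute with $\tau$); one checks directly in the same way that $Et_{f\circ g}=Et_f\circ Et_g$ along the canonical identification $(f\circ g)^*\cong g^*f^*$, which is what will be needed for the left-module axioms. The only step with any real content is the continuity argument — the germ-chasing needed to shrink $U$ to a neighbourhood contained in the preimage — and even that is the familiar étalé-space manipulation; openness, once $\lambda_x$ is in place, is a one-line image computation.
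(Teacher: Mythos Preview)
Your proof is correct and follows exactly the same approach as the paper: define $Et_f$ stalkwise via the colimit map $(f^*\mathscr{F})_x\to\mathscr{F}_{f(x)}$ induced by openness of $f$, and verify openness via the image identity $Et_f(\epsilon_{f^*\mathscr{F}}(U,s))=\epsilon_{\mathscr{F}}(f(U),s)$. You are in fact more thorough than the paper, which records only the openness computation and leaves continuity and naturality implicit; your additional germ-chasing for continuity is correct and fills a gap the paper glosses over.
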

\begin{proof}
	For every $x \in X$ we have a map on stalks $$f^*(\mathscr{F})_x = \underset{x \in U \in \Ouv(X)}{\mathrm{colim}} \mathscr{F}(f(U)) \rightarrow  \underset{x \in V \in \Ouv(Y)}{\mathrm{colim}} \mathscr{F}(V) = \mathscr{F}_{f(x)}$$ because for every open neighborhodd $U$ of $x$ in $X$, $f(U)$ is an open neighborhood of $f(x)$ in $Y$. These maps assemble together into a map
	$$Et_f: Et(f^*(\mathscr{F})) = \underset{x \in X}{\coprod} f^*(\mathscr{F})_x \rightarrow \underset{y \in Y}{\coprod} \mathscr{F}_y = Et(\mathscr{F})  $$
	which is open because $Et_f(\epsilon_{f^*(\mathscr{F})} (U,s)) = \epsilon_{\mathscr{F}}(f(U), s)$.
\end{proof}

Let us now explain how étalé spaces make $Top_{open}$ into a left $Set$-module.

Consider the functor $\Ouv : Top_{open} \rightarrow CAT$, sending $X \in Top$ to the poset $\Ouv(X)^{op}$ of open subsets of $X$ with reverse inclusions as morphisms, and sending an open map $f : X \rightarrow Y$ to the image functor $\Ouv(f) : \Ouv(X)^{op} \rightarrow \Ouv(Y)^{op}$.
The functor $\Ouv$ gives rise to a Grothendieck op-fibration $p^\heartsuit : \oplaxcolim{X \in Top_{open}} \Ouv(X)^{op} \rightarrow Top_{open}$.\\
Define $F_{M} := \oplaxcolim{X \in Top_{open}} \Ouv(X)^{op}$ and $p_{M} := \mathrm{Ho}(p^\heartsuit)$.

For any $X \in Top_{open}$ the fiber of $p_{M}$ over $X$ is $\Ouv(X)^{op}$, and a morphism $\Ouv(X)^{op} \rightarrow Set$ in $hCAT$ is an isomorphism class of presheaves on $X$.

More generally, for any functor $x : U \rightarrow Top_{open}$ in $hCAT$ we have
$$\llbracket x \rrbracket_{M} \cong \oplaxcolim{u \in U}  \Ouv(x(u))^{op} $$

For any functor $x : U \rightarrow Top_{open}$ in $hCAT$ we need to define a map
$$\oversum{x} : \mathrm{Hom}_{hCAT}(\llbracket x \rrbracket_{M}, Set) \rightarrow \mathrm{Hom}_{hCAT}(U, Top_{open})$$

For every $f : \llbracket x \rrbracket_{M} \rightarrow Set$ in $hCAT$ we define a functor $(\oversum{x} f)^\heartsuit : U \rightarrow Top_{open}$ the following way:
For any object $u \in U$ we have an inclusion functor $\iota_u: \Ouv(x(u))^{op} \rightarrow \oplaxcolim{v \in U}  \Ouv(x(v))^{op} \cong \llbracket x \rrbracket_{M}$.
Then $f \circ \iota_u : \Ouv(x(u))^{op} \rightarrow Set$ is a presheaf on $x(u)$.
We define $ \oversum{x}(f)^\heartsuit(u)$ to be the étalé space of this presheaf.
$$ \oversum{x}(f)^\heartsuit(u) := Et(f \circ \iota_u )$$

Given a morphism $\alpha : u \rightarrow v$ in $U$ we get a natural transformation $\iota_u \Rightarrow \iota_v \circ \Ouv(x(\alpha))$, where $x(\alpha) : \Ouv(x(u))^{op} \rightarrow \Ouv(x(v))^{op}$ is the image functor associated to the open map $x(\alpha)$. This then induces a morphism of presheaves $\tau_\alpha: f \circ \iota_u  \rightarrow f \circ \iota_v \circ \Ouv(x(\alpha)) $, which then induces an open map on étalé spaces $Et(\tau_\alpha) : Et( f \circ \iota_u ) \rightarrow Et(f \circ \iota_v \circ \Ouv(x(\alpha)))$.
By Lemma \ref{lemmaEtFunctorial2} we have a canonical open map $Et(f \circ \iota_v \circ \Ouv(x(\alpha))) \rightarrow Et(f \circ \iota_v)$. So in total we obtain an open map $$ \oversum{x}(f)^\heartsuit(u) = Et(f \circ \iota_u ) \rightarrow  Et(f \circ \iota_v ) =  \oversum{x}(f)(v)  $$
so $ \oversum{x}(f)^\heartsuit : U \rightarrow Top_{open}$ is a functor. 

Now define $\oversum{x}(f) := \mathrm{Ho}( \oversum{x}(f)^\heartsuit)$.
Then $\oversum{x}$ is a natural function $$\oversum{x} : \mathrm{Hom}_{hCAT}(\llbracket x \rrbracket_{M}, Set) \rightarrow \mathrm{Hom}_{hCAT}(U, Top_{open})$$

Next we need to define for any small category $D$, functor $x : D \rightarrow Top_{open}$ and functor $f : \llbracket x \rrbracket_M \rightarrow Set$ the flattening map 
$$f^{M,\flat} : \oplaxcolim{d \in D} \oplaxcolim{ U \in \Ouv(x(d))^{op}  } f(d,U) \rightarrow \oplaxcolim{d \in D} \Ouv( Et(f \circ \iota_d ))^{op}  $$

where the $f(d,U)$ are regarded as discrete categories.
Just to make the notation a bit more intuitive, define for every $d \in D$ that $X_d := x(d)$ and $\mathscr{F}_d := f \circ \iota_d$. Then $X_d$ is a topological space and $\mathscr{F}_d$ is a presheaf on $X_d$, and we have to define a map
$$f^{M,\flat} : \oplaxcolim{d \in D} \oplaxcolim{ U \in \Ouv(X_d)^{op}  } \mathscr{F}_d(U) \rightarrow \oplaxcolim{d \in D} \Ouv( Et(\mathscr{F}_d ))^{op}  $$

We have for every $d \in D$ a map $\epsilon_{\mathscr{F}_d} : \oplaxcolim{U \in \Ouv(X_d)^{op}} \mathscr{F}_d(U) \rightarrow \Ouv(Et(\mathscr{F}_d))^{op}$ sending an open subset $U \subseteq X_d$ and a section $s \in \mathscr{F}_d(U)$ to the open subset $\epsilon_{\mathscr{F}_d}(U,s) = \{(x,s_x) | x \in X_d  \}$ of the étalé space $Et(\mathscr{F}_d)$. If we have a morphism $\alpha : (U,s) \rightarrow (V,t)$ in $\oplaxcolim{U \in \Ouv(X_d)^{op}} \mathscr{F}_d(U)$ then we have an inclusion $U \supseteq V$ of open subsets and a morphism $s|_V \rightarrow t$ in the discrete category $\mathscr{F}_d(V)$. So we have in fact an identity $s|_V = t$. This implies an inclusion of subsets $\epsilon_{\mathscr{F}_d}(U,s) \supseteq \epsilon_{\mathscr{F}_d}(V,t)$ in $Et(\mathscr{F}_d)$.
So $\epsilon_{\mathscr{F}_d} : \oplaxcolim{U \in \Ouv(X_d)^{op}} \mathscr{F}_d(U) \rightarrow \Ouv(Et(\mathscr{F}_d))^{op}$ is in fact a functor.
One can now check that this functor is natural in $d$. This naturality claim comes down to the assertion that for any morphism $\alpha : d \rightarrow e$ in $D$ and $U \in \Ouv(X_d)^{op}$ and $s \in \mathscr{F}_d(U)$ we have
$$\oversum{x}(f)^\heartsuit(\alpha) (\epsilon_{\mathscr{F}_d}(U,s)) = \epsilon_{\mathscr{F}_e}( x(\alpha)(U), f(\alpha,id_U)(s)  )$$
and this assertion is in fact true. With this naturality, we can then use the universal property of the oplax colimit to get a functor $f^{M,\flat,\heartsuit}$ that satisfies $f^{M,\flat,\heartsuit} \circ \iota_d = \epsilon_{\mathscr{F}_d}$.
We can then define $f^{M,\flat} := \mathrm{Ho}(f^{M,\flat,\heartsuit})$ and then have a flattening function for our left $Set$-module $Top_{open}$.

We now need to verify the axioms of a left $Set$-module.

Unit Axiom:
Let $X$ be a topological space, and $\mathscr{F}$ the constant $1$ presheaf $\mathscr{F}(U) = 1$.
Then for every $x \in X$ we also have $\mathscr{F}_x \cong 1$, and using this we obtain a homeomorphism $Et(\mathscr{F}) \cong X$ in $Top_{open}$. Since this homeomorphism is natural in $X$, this implies the Unit Axiom for the left $Set$-module $Top_{open}$.

Sum Associativity Axiom:
Take a topological space $X$, a presheaf $\mathscr{F}$ on $X$ and a presheaf $\mathscr{G}$ on $Et(\mathscr{F})$.

To show the Sum Associativity Axiom we need to show that there is a homeomorphism
$$\oversum{\oversum{X}\mathscr{F}}\mathscr{G} \cong \oversum{X} \oversum{\mathscr{F}} \mathscr{G} \circ \mathscr{F}^{M,\flat} $$
natural in $X$, $\mathscr{F}$ and $\mathscr{G}$.

The space $\oversum{\oversum{X}\mathscr{F}}\mathscr{G}$ is homeomorphic the étalé space $Et(\mathscr{G})$ of $\mathscr{G}$.

The map $\mathscr{F}^{M,\flat}$ is the functor $\epsilon_{\mathscr{F}} : \oplaxcolim{V \in \Ouv(X)^{op} } \mathscr{F}(V) \rightarrow \Ouv(Et(\mathscr{F}))^{op} $ sending $(V,s)$ to $\epsilon_{\mathscr{F}}(V,s) = \{(x,s_x) | x \in V \}$.
	
Let $\mathscr{F} \boxtimes \mathscr{G} := \oversum{\mathscr{F}} \mathscr{G} \circ \mathscr{F}^{M,\flat}$. Then $\mathscr{F} \boxtimes \mathscr{G}$ is isomorphic to the presheaf on $X$ that sends an open subset $V \subseteq X$ to $\underset{s \in \mathscr{F}(V)}{\coprod} \mathscr{G}(\epsilon_{\mathscr{F}}(V,s))$. $$(\mathscr{F} \boxtimes \mathscr{G})(V) \cong \underset{s \in \mathscr{F}(V)}{\coprod} \mathscr{G}(\epsilon_{\mathscr{F}}(V,s))$$

To prove the Sum Associativity Axiom we now need to show that there is a natural homeomorphism $$Et(\mathscr{G}) \cong Et(\mathscr{F} \boxtimes \mathscr{G}) $$

\begin{lem}
	For every $x \in X$ there is a natural isomorphism
	$$(\mathscr{F} \boxtimes \mathscr{G})_x \cong \underset{s \in \mathscr{F}_x  }{\coprod} \mathscr{G}_{(x,s)}  $$
	where $\mathscr{G}_{(x,s)}$ means the stalk of $\mathscr{G}$ at $(x,s) \in \underset{x \in X}{\coprod} \mathscr{F}_x = Et(\mathscr{F})$.
\end{lem}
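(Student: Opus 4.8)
The plan is to compute both sides as colimits of the presheaf $\mathscr{G}$ over suitable diagrams and to exhibit a natural identification of those diagrams. Recall from the discussion just above the lemma that $(\mathscr{F} \boxtimes \mathscr{G})(V) \cong \underset{s \in \mathscr{F}(V)}{\coprod} \mathscr{G}(\epsilon_{\mathscr{F}}(V,s))$, where for an inclusion $V' \subseteq V$ of opens of $X$ and $s \in \mathscr{F}(V)$ the restriction map carries the summand indexed by $s$ into the summand indexed by $s|_{V'}$ via $\mathscr{G}(\epsilon_{\mathscr{F}}(V,s)) \to \mathscr{G}(\epsilon_{\mathscr{F}}(V',s|_{V'}))$, which makes sense because $\epsilon_{\mathscr{F}}(V',s|_{V'}) \subseteq \epsilon_{\mathscr{F}}(V,s)$. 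Taking the stalk at $x$ yields $(\mathscr{F} \boxtimes \mathscr{G})_x \cong \underset{x \in V}{\mathrm{colim}}\, \underset{s \in \mathscr{F}(V)}{\coprod}\, \mathscr{G}(\epsilon_{\mathscr{F}}(V,s))$. First I would repackage this double colimit as a single colimit: let $\mathrm{el}_x(\mathscr{F})$ be the category whose objects are pairs $(V,s)$ with $V$ an open neighborhood of $x$ and $s \in \mathscr{F}(V)$, with a (necessarily unique) morphism $(V,s) \to (V',s')$ exactly when $V' \subseteq V$ and $s|_{V'} = s'$. Then $(V,s) \mapsto \mathscr{G}(\epsilon_{\mathscr{F}}(V,s))$ is a functor $\mathrm{el}_x(\mathscr{F}) \to Set$, and $(\mathscr{F} \boxtimes \mathscr{G})_x \cong \underset{(V,s) \in \mathrm{el}_x(\mathscr{F})}{\mathrm{colim}}\, \mathscr{G}(\epsilon_{\mathscr{F}}(V,s))$.

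Next I would decompose this colimit over the connected components of $\mathrm{el}_x(\mathscr{F})$, using that a $Set$-valued colimit over any category is the disjoint union over its set of connected components of the colimits over each component. The key combinatorial point is $\pi_0(\mathrm{el}_x(\mathscr{F})) \cong \mathscr{F}_x$: the germ $s_x \in \mathscr{F}_x$ is invariant along morphisms, hence constant on components, and the resulting map $\pi_0(\mathrm{el}_x(\mathscr{F})) \to \mathscr{F}_x$ is surjective since every germ has a representative, and injective since two pairs with a common germ agree on some open $V'' \ni x$ inside the intersection of their domains, so both receive a morphism from $(V'', s|_{V''})$. Moreover the component indexed by a germ $\overline{s}$, i.e. the full subcategory on pairs $(V,s)$ with $s_x = \overline{s}$, is filtered (nonempty because $\overline{s}$ has a representative; any two objects admit a common lower bound obtained by shrinking to an open neighborhood of $x$ on which their sections coincide). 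So the colimit over this component is the filtered colimit of the sets $\mathscr{G}(\epsilon_{\mathscr{F}}(V,s))$ over the $(V,s)$ with $s_x = \overline{s}$, ordered by reverse inclusion.

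Finally I would identify this filtered colimit with the stalk $\mathscr{G}_{(x,\overline{s})}$. For this one uses the standard fact about étalé spaces that the basic opens $\epsilon_{\mathscr{F}}(V,s)$ with $x \in V$ and $s_x = \overline{s}$ form a neighborhood basis of the point $(x,\overline{s})$ in $Et(\mathscr{F})$: each such set contains $(x,\overline{s})$, and since the topology of $Et(\mathscr{F})$ is generated by the sets $\epsilon_{\mathscr{F}}(U,t)$, any open neighborhood of $(x,\overline{s})$ contains one of them with $(x,\overline{s}) \in \epsilon_{\mathscr{F}}(V,s)$, which forces $x \in V$ and $s_x = \overline{s}$ (see \cite[Section II.6]{maclane1994sheaves}). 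Cofinality then gives that the filtered colimit above is $\mathscr{G}_{(x,\overline{s})}$, and stringing the isomorphisms together yields $(\mathscr{F} \boxtimes \mathscr{G})_x \cong \underset{s \in \mathscr{F}_x}{\coprod} \mathscr{G}_{(x,s)}$. Naturality in $\mathscr{G}$ is immediate, since every map used is a restriction map of $\mathscr{G}$; naturality (up to isomorphism) in $\mathscr{F}$ and $X$ follows along the same lines from functoriality of $Et$, of $\epsilon_{\mathscr{F}}$, and of $\boxtimes$.

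The step I expect to be the main obstacle is the passage from a coproduct indexed by the varying set $\mathscr{F}(V)$ to the coproduct indexed by the fixed set $\mathscr{F}_x$: the honest way to do this is the connected-components decomposition of $\mathrm{el}_x(\mathscr{F})$ together with the verification that each component is filtered, which is where the content lies; the étalé-space neighborhood-basis statement, while needed, is routine.
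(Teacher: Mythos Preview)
Your argument is correct. It differs from the paper's proof in packaging rather than in essential content: the paper writes down the map $(\mathscr{F}\boxtimes\mathscr{G})_x \to \coprod_{s\in\mathscr{F}_x}\mathscr{G}_{(x,s)}$ directly (using $\mathscr{F}(U)\to\mathscr{F}_x$ and $\mathscr{G}(\epsilon_{\mathscr{F}}(U,s))\to\mathscr{G}_{(x,s)}$) and then verifies bijectivity by hand, with surjectivity coming from the fact that the sets $\epsilon_{\mathscr{F}}(W,\tilde s)$ generate the topology of $Et(\mathscr{F})$ and injectivity left as a similar check. Your version is a cleaner categorical repackaging of exactly the same ingredients: the connected-components decomposition of $\mathrm{el}_x(\mathscr{F})$ and its identification with $\mathscr{F}_x$ is the abstract form of the paper's (implicit) injectivity step, and your cofinality/neighborhood-basis argument is the abstract form of the paper's surjectivity step. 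What your approach buys is that bijectivity is never checked elementwise, which makes naturality more transparent; what the paper's approach buys is that no categorical machinery (Grothendieck construction, filteredness, cofinality) needs to be invoked. One small wording slip: with your convention that $(V,s)\to(V',s')$ means $V'\subseteq V$, the object $(V'',s|_{V''})$ receives morphisms \emph{from} $(V,s)$ and $(V',s')$, not the other way around; this does not affect either the $\pi_0$ computation or the filteredness claim, but you should phrase it consistently.
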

\begin{proof}
	We have
	$(\mathscr{F} \boxtimes \mathscr{G})_x \cong \underset{x \in U \in \Ouv(X)}{\mathrm{colim}} \underset{s \in \mathscr{F}(U)}{\coprod}\mathscr{G}(\epsilon_{\mathscr{F}}(U,s))$.
	Now for every open neighborhood $U$ of $x$ and every $s \in \mathscr{F}(U)$ we have a map $\mathscr{F}(U) \rightarrow \mathscr{F}_x$, and we know that $\epsilon_{\mathscr{F}}(U,s)$ is an open neighborhood of $(x,s) \in Et(\mathscr{F})$ so we have a map $\mathscr{G}(\epsilon_{\mathscr{F}}(U,s)) \rightarrow \mathscr{G}_{(x,s)}$.
	These maps assemble together into a map $(\mathscr{F} \boxtimes \mathscr{G})_x \rightarrow \underset{s \in \mathscr{F}_x  }{\coprod} \mathscr{G}_{(x,s)} $.
	We claim that this map is surjective:
	If we have $(s,t) \in \underset{s \in \mathscr{F}_x  }{\coprod} \mathscr{G}_{(x,s)}$, then there exists an open neighborhood $V$ of $(x,s)$ in $Et(\mathscr{F})$ and a section $\tilde{t} \in \mathscr{G}(V)$ such that $t = \tilde{t}_{(x,s)}$. Since the topology of $Et(\mathscr{F})$ is generated by open sets of the form $\epsilon_{\mathscr{F}}(W,\tilde{s})$, we know there exists an open set $W \subseteq X$ and some $\tilde{s} \in \mathscr{F}(W)$ such that $(x,s) \in \epsilon_{\mathscr{F}}(W,\tilde{s}) \subseteq V$. Then $(\tilde{s}, t|_{\epsilon_{\mathscr{F}}(W,\tilde{s})} )$ lies in $\underset{y \in \mathscr{F}(W)}{\coprod}\mathscr{G}(\epsilon_{\mathscr{F}}(W,y)) \cong (\mathscr{F} \boxtimes \mathscr{G})(W)$ and $(\tilde{s}, t|_{\epsilon_{\mathscr{F}}(W,\tilde{s})} )_x = (s,t)$. One can similarly check that the above map is injective, and then it is an isomorphism of sets.
\end{proof}

With this lemma we get a natural bijective map $\Phi: Et(\mathscr{G}) \rightarrow Et(\mathscr{F} \boxtimes \mathscr{G})$ defined by
\footnotesize
$$Et(\mathscr{G}) = \underset{y \in Et(\mathscr{F})}{\coprod} \mathscr{G}_y = \underset{(x,s) \in \underset{x \in X}{\coprod} \mathscr{F}_x }{\coprod} \mathscr{G}_{(x,s)} \cong \underset{x \in X}{\coprod} \underset{s \in \mathscr{F}_x}{\coprod} \mathscr{G}_{(x,s)} \cong \underset{x \in X}{\coprod} (\mathscr{F} \boxtimes \mathscr{G})_x = Et(\mathscr{F} \boxtimes \mathscr{G}) $$
\normalsize

We now just need to show that this map is open and continuous.

\begin{lem}\label{lemmaOpenSetsInEt}
	For all open $U \subseteq X$, $s \in \mathscr{F}(U)$ and $t \in \mathscr{G}(\epsilon_{\mathscr{F}}(U,s))$ we have
	$$\epsilon_{\mathscr{F} \boxtimes \mathscr{G}}(U, (s,t)  ) = \Phi(\epsilon_{\mathscr{G}}(\epsilon_{\mathscr{F}}(U,s), t)) $$
	where $\Phi$ is the natural bijective map $\Phi: Et(\mathscr{G}) \rightarrow Et(\mathscr{F} \boxtimes \mathscr{G})$.
\end{lem}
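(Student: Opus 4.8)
The plan is to expand both sides as explicit subsets of $Et(\mathscr{F} \boxtimes \mathscr{G}) = \coprod_{x \in X} (\mathscr{F} \boxtimes \mathscr{G})_x$ and to check that the bijection $\Phi$ matches them up point by point over $U$. For the left-hand side, recall that $\mathscr{F} \boxtimes \mathscr{G}$ is, up to isomorphism, the presheaf $V \mapsto \coprod_{s^\prime \in \mathscr{F}(V)} \mathscr{G}(\epsilon_{\mathscr{F}}(V,s^\prime))$, so $(s,t)$ really is a section of $\mathscr{F} \boxtimes \mathscr{G}$ over $U$, and by the construction of the topology on the \'etal\'e space $Et(\mathscr{F} \boxtimes \mathscr{G})$ we have $\epsilon_{\mathscr{F} \boxtimes \mathscr{G}}(U,(s,t)) = \{\, (x, (s,t)_x) \mid x \in U \,\}$, where $(s,t)_x \in (\mathscr{F} \boxtimes \mathscr{G})_x$ denotes the germ of the section $(s,t)$ at $x$. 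For the right-hand side, one first uses that $\epsilon_{\mathscr{F}}(U,s) = \{\, (x,s_x) \mid x \in U \,\}$, so the generic point of $\epsilon_{\mathscr{G}}(\epsilon_{\mathscr{F}}(U,s), t)$ has the form $\big( (x,s_x),\, t_{(x,s_x)} \big)$ with $x \in U$, where $t_{(x,s_x)} \in \mathscr{G}_{(x,s_x)}$ is the germ of $t$ at the point $(x,s_x)$ of $Et(\mathscr{F})$.

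Next I would trace $\Phi$ through its defining chain of identifications. Writing a germ of $Et(\mathscr{G})$ as a pair $\big( (x,s^\prime), \sigma \big)$ with $(x,s^\prime) \in Et(\mathscr{F})$ and $\sigma \in \mathscr{G}_{(x,s^\prime)}$, the three bijections defining $\Phi$ send it successively to $\big( x, (s^\prime,\sigma) \big)$, with $(s^\prime,\sigma)$ viewed in $\coprod_{s^{\prime\prime} \in \mathscr{F}_x} \mathscr{G}_{(x,s^{\prime\prime})}$, and then to $\big( x, \psi_x(s^\prime,\sigma) \big)$, where $\psi_x : \coprod_{s^{\prime\prime} \in \mathscr{F}_x} \mathscr{G}_{(x,s^{\prime\prime})} \to (\mathscr{F} \boxtimes \mathscr{G})_x$ is the inverse of the isomorphism proved in the lemma just above. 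Applying this to our point gives $\Phi\big( (x,s_x), t_{(x,s_x)} \big) = \big( x, \psi_x(s_x, t_{(x,s_x)}) \big)$.

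The only step with genuine content is the identification $\psi_x(s_x, t_{(x,s_x)}) = (s,t)_x$, and this is where I expect to do the real work. To see it I would return to how the isomorphism $(\mathscr{F} \boxtimes \mathscr{G})_x \cong \coprod_{s^\prime \in \mathscr{F}_x} \mathscr{G}_{(x,s^\prime)}$ was constructed: it is assembled from the germ maps $\mathscr{F}(W) \to \mathscr{F}_x$ and $\mathscr{G}(\epsilon_{\mathscr{F}}(W,s^\prime)) \to \mathscr{G}_{(x,s^\prime)}$ ranging over open neighbourhoods $W$ of $x$. Feeding the representative $(s,t) \in (\mathscr{F} \boxtimes \mathscr{G})(U)$ (taken in the $s$-component of the coproduct, with $t \in \mathscr{G}(\epsilon_{\mathscr{F}}(U,s))$) through this construction shows that the image of the germ $(s,t)_x$ is exactly $(s_x,\, t_{(x,s_x)})$, whence $\psi_x(s_x,t_{(x,s_x)}) = (s,t)_x$. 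Putting the three paragraphs together, $\Phi$ carries each point $\big( (x,s_x), t_{(x,s_x)} \big)$ of $\epsilon_{\mathscr{G}}(\epsilon_{\mathscr{F}}(U,s), t)$ to the point $\big( x, (s,t)_x \big)$ of $\epsilon_{\mathscr{F} \boxtimes \mathscr{G}}(U,(s,t))$, and as $x$ ranges over all of $U$ on both sides this gives the asserted equality $\Phi(\epsilon_{\mathscr{G}}(\epsilon_{\mathscr{F}}(U,s), t)) = \epsilon_{\mathscr{F} \boxtimes \mathscr{G}}(U,(s,t))$. Everything here is a germ chase through constructions already in place; no new topological input is needed, so I would keep the writeup terse.
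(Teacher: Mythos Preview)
Your proposal is correct and follows essentially the same approach as the paper: both arguments unwind the definitions of $\epsilon_{\mathscr{F}}$, $\epsilon_{\mathscr{G}}$, $\epsilon_{\mathscr{F}\boxtimes\mathscr{G}}$ and the bijection $\Phi$, and match up the two sides pointwise over $x\in U$. The paper compresses the whole thing into a single displayed chain of equalities and leaves the step $\Phi\big((x,s_x),t_{(x,s_x)}\big)=(x,(s,t)_x)$ implicit, whereas you (quite reasonably) flag this as the one step with content and justify it via the construction of the stalk isomorphism; your instinct to keep the final writeup terse is exactly right.
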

\begin{proof}
    $$\Phi(  \epsilon_{\mathscr{G}}(\epsilon_{\mathscr{F}}(U,s), t))  = \Phi(\{ (y,t_y) | y \in \epsilon_{\mathscr{F}}(U,s)\}) =$$ $$ =\Phi(\{ (y,t_y) | y = (x,s_x), x \in U\}) = \{ (x, (s,t)_x) |  x \in U  \} = \epsilon_{\mathscr{F} \boxtimes \mathscr{G}}(U, (s,t)  ) $$
\end{proof}

This lemma then shows that our map $Et(\mathscr{G}) \rightarrow Et(\mathscr{F} \boxtimes \mathscr{G}) $ is a homeomorphism. This homeomorphism is natural in $X$ , $\mathscr{F}$ and $\mathscr{G}$ and this then proves the Sum Associativity Axiom of $Top_{open}$.

Flatten Associativity Axiom:
Take functors $x : D \rightarrow Top_{open}$, $f : \llbracket x \rrbracket_M\rightarrow Set$ and $g : \llbracket \oversum{x} f \rrbracket \rightarrow Set$.

Write $X_d := x(d)$, $\mathscr{F}_d := f(d,-)$, $\mathscr{G}_d := g(d,-)$.
We need to show that the following diagram commutes:
\footnotesize
$$\xymatrix{ \oplaxcolim{d \in D} \oplaxcolim{(U,s) \in \oplaxcolim{U \in \Ouv(X_d)^{op} }  \mathscr{F}_d(U)}  \mathscr{G}_d(\epsilon_{\mathscr{F}_d}(U,s)) \ar[r] \ar[d] & \oplaxcolim{d \in D} \oplaxcolim{U \in \Ouv(X_d)^{op}} (\mathscr{F}_d \boxtimes \mathscr{G}_d)(U)  \ar[d]\\
\oplaxcolim{d \in D} \oplaxcolim{V \in \Ouv(Et(\mathscr{F}_d))^{op}} \mathscr{G}_d(V) \ar[r] & \oplaxcolim{d \in D} \Ouv(Et(\mathscr{G}_d))^{op}  }  $$
\normalsize

Since $\Ouv(Et(\mathscr{G}_d))^{op}$ is a pre-order, any two morphisms with the same domain and codomain coincide in it. For this reason the commutativity of the above diagram can be \enquote{checked on objects}, in the sense that it commutes if and only if for every object $d \in D$ the following diagram commutes

$$\xymatrix{ \oplaxcolim{(U,s) \in \oplaxcolim{U \in \Ouv(X_d)^{op} }  \mathscr{F}_d(U)}  \mathscr{G}_d(\epsilon_{\mathscr{F}_d}(U,s)) \ar[r] \ar[d]^(.6){\epsilon_{\mathscr{F}_d} \underset{A}{\times} id_F} & \oplaxcolim{U \in \Ouv(X_d)^{op}} (\mathscr{F}_d \boxtimes \mathscr{G}_d)(U)  \ar[d]^{\epsilon_{\mathscr{F}_d \boxtimes \mathscr{G}_d} } \\
	 \oplaxcolim{V \in \Ouv(Et(\mathscr{F}_d))^{op}} \mathscr{G}_d(V) \ar[r]^{\epsilon_{\mathscr{G}_d}} &  \Ouv(Et(\mathscr{G}_d))^{op}  }  $$
 
And this follows from Lemma \ref{lemmaOpenSetsInEt}.
So $Top_{open}$ is a left $Set$-module.

\bibliographystyle{plain}

\begin{thebibliography}{99}
	
	\bibitem{bird1989flexible}
	G. J. Bird, G. M. Kelly, A. J. Power, R. H. Street, 
	\newblock Flexible limits for $2$-categories,
	\newblock {Journal of Pure and Applied Algebra} 61 (1989), 1--27.
	
	\bibitem{durov2007new}
	N. Durov,
	\newblock New approach to arakelov geometry,
	\newblock {arXiv preprint arXiv:0704.2030}, 2007.
	
	\bibitem{fischer2006complex}
	G. Fischer,
	\newblock Complex analytic geometry,
	\newblock {Springer} 538 (2006).
	
	\bibitem{liu2015stochastic}
	W. Liu, M. R{\"o}ckner,
	\newblock Stochastic partial differential equations,
	\newblock {Springer} (2015).
	
	\bibitem{lorscheid2018F1}
	O. Lorscheid,
	\newblock $\mathbb{F}_1$ for everyone,
	\newblock {arXiv preprint arXiv:1801.05337}, 2018.
	
	\bibitem{maclane1994sheaves}
	S. MacLane, I. Moerdijk,
	\newblock Sheaves in Geometry and Logic: A first introduction to topos theory,
	\newblock {Springer Science {\&} Business Media} (1994)
	
	\bibitem{orosi2018simple}
	G. Orosi,
	\newblock A simple derivation of Faulhaber's formula,
	\newblock {Appl. Math E-Notes}, 18 (2018) 124--126.
	
	\bibitem{rezk1996model}
	C. Rezk,
	\newblock A model category for categories,
	\newblock {Avaiable on author's webpage}, 1996.		
\end{thebibliography}

\footnotesize
\textit{Email address}: \texttt{peterjbonart@gmail.com}

\end{document}